\def\bbB{\mathbb{B}}
\def\bbE{\mathbb{E}}
\def\bbF{\mathbb{F}}
\def\bbI{\mathbb{I}}
\def\bbN{\mathbb{N}}
\def\bbP{\mathbb{P}}
\def\bbQ{\mathbb{Q}}
\def\bbR{\mathbb{R}}
\def\bbV{\mathbb{V}}
\def\cA{{\mathcal A}}
\def\cC{{\mathcal C}}
\def\cF{{\mathcal F}}
\def\cL{{\mathcal L}}
\def\cM{{\mathcal M}}
\def\cN{{\mathcal N}}
\def\cP{{\mathcal P}}
\def\sB{\mathscr{B}}
\def\sE{\mathscr{E}}
\def\sL{\mathscr{L}}
\def\sS{\mathscr{S}}
\def\gd{\delta}
\def\ge{\epsilon}
\def\ggg{\gamma} 
\def\gk{\kappa}
\def\gm{\mu}
\def\gq{\theta}
\def\gs{\sigma}
\def\gt{\tau}
\def\gO{\Omega}
\def\gQ{\Theta}
\def\ds{\ensuremath{\displaystyle}}
\newcommand{\dd}{\mathop{}\!\mathrm{d}}
\newcommand{\tup}{\alpha} 
\newcommand{\tdown}{\beta} 
\newcommand{\uu}{b} 
\newcommand{\bP}[1]{\mathbb{P}_{#1}} 
\newcommand{\bPT}[2]{\mathbb{P}_{#1}^{#2}}
\newcommand{\sref}[1]{Section~\ref{#1}}
\newcommand{\tref}[1]{Table~\ref{#1}}
\newcommand{\cref}[1]{Chapter~\ref{#1}}
\newcommand{\dref}[1]{Definition~\ref{#1}}
\newcommand{\lemmaref}[1]{Lemma~\ref{#1}}
\newcommand{\propref}[1]{Proposition~\ref{#1}}
\newcommand{\thmref}[1]{Theorem~\ref{#1}}
\newtheorem{theorem}{Theorem}[section]
\newtheorem{corollary}[theorem]{Corollary} 
\newtheorem{lemma}[theorem]{Lemma}
\newtheorem{remark}[theorem]{Remark}
\newtheorem{proposition}[theorem]{Proposition}
\newtheorem{definition}[theorem]{Definition}
\begin{document}

\title{The mutual arrangement of Wright--Fisher diffusion path measures and its impact on parameter estimation}
\author{Paul A.\ Jenkins\footnote{{\small Dept of Statistics \& Dept of Computer Science, University of Warwick, United Kingdom. \href{mailto:p.jenkins@warwick.ac.uk}{p.jenkins@warwick.ac.uk}.}}}
\date{}
\maketitle
\begin{abstract}
The Wright--Fisher diffusion is a fundamentally important model of evolution encompassing genetic drift, mutation, and natural selection. Suppose you want to infer the parameters associated with these processes from an observed sample path. Then to write down the likelihood one first needs to know the mutual arrangement of two path measures under different parametrizations; that is, whether they are absolutely continuous, equivalent, singular, and so on. In this paper we give a complete answer to this question by finding the separating times for the diffusion---the stopping time before which one measure is absolutely continuous with respect to the other and after which the pair is mutually singular. In one dimension this extends a classical result of Dawson on the local equivalence between neutral and non-neutral Wright--Fisher diffusion measures. Along the way we also develop new zero-one type laws for the diffusion on its approach to, and emergence from, the boundary. As an application we derive an explicit expression for the joint maximum likelihood estimator of the mutation and selection parameters and show that its convergence properties are closely related to the separating time. 
\end{abstract}

\section{Introduction}
Consider the solution to a scalar, time-homogeneous, stochastic differential equation (SDE) on an interval $I = (l,r) \subseteq \bbR$:
\begin{align}
	\label{eq:SDE}
\dd X_t &= \gm_\theta(X_t)\dd t + \gs(X_t) \dd W_t, & X_0 &= x_0 \in {I},
\end{align}
with drift and diffusion coefficients $\gm_\theta:I\to\bbR$ and $\gs:I\to[0,\infty)$, and a parameter $\theta\in\Theta$. Suppose we are interested in statistical inference of a model $\{\bPT{\gq}{T}:\:\gq\in\Theta\}$ given observations from a sample path $X^T = (X_t:\: t\in[0,T])$, where $\bPT{\gq}{T}$ denotes the probability measure on sample paths associated with \eqref{eq:SDE} up to time $T$. In the ideal situation in which $X^T$ itself is observed, the likelihood for $\gq$ is given by the Radon--Nikodym derivative $\dd\bPT{\gq}{T}/\dd\bbQ^{T}$ for some dominating measure $\bbQ^T$, typically taken to be a fixed hypothesis $\bPT{\gq_0}{T}$, with respect to which $\bPT{\gq}{T}$ is absolutely continuous. Even when $X^T$ is not observed directly and we have only discrete and/or noisy observations from it, it is a common strategy to treat the whole path as an unobserved latent variable, to then augment the state space with this path, and finally to integrate over it via Monte Carlo \citep[e.g.][]{pap:rob:2012, pap:etal:2013, van:sch:2018, jen:etal:2023}; again we must first know whether $\bPT{\gq}{T}$ is absolutely continuous with respect to $\bPT{\gq_0}{T}$. That being the case a maximum likelihood estimator is given by
\[
\widehat{\theta} := \arg\max_{\theta\in\Theta}\frac{\dd\bPT{\gq}{T}}{\dd\bPT{\gq_0}{T}}.
\]
The goal for the present work is to study in detail when absolute continuity with respect to a dominating measure does and does not hold in the context of the \emph{Wright--Fisher diffusion}, and to find new zero-one type laws which can testify to the mutual arrangement (equivalence, mutual singularity, absolute continuity, etc.)\ of the two measures. From this we can derive the estimator $\widehat{\theta}$ and study its properties. As we discover below, absolute continuity or lack thereof also plays a key role in the estimator's convergence properties.

The Wright--Fisher diffusion is a fundamental model for the evolution of the frequency $X_t$ at time $t$ of a genetic variant, or \emph{allele}, in a large, randomly mating population (for an introduction to this diffusion in the context of population genetics see for example 
\citet{eth:2011}). There is a great deal of interest in inferring evolutionary parameters governing this diffusion from samples of DNA sequence data. With the advent of  technologies to sequence ancient DNA, such data is becoming increasingly available in time series form \citep{deh:etal:2020}. Though our focus is in its role as a model for evolution, we note that the Wright--Fisher diffusion has applications in other fields including biophysics \citep{dan:etal:2012:PCS}, neuroscience \citep{d'on:etal:2018}, mathematical finance \citep{del:shi:2002, gou:jas:2006, pal:2011}, and as a prior in Bayesian nonparametric statistics \citep{wal:etal:2007, fav:etal:2009, per:etal:2017}.

We study a model with a very general drift term in which equation \eqref{eq:SDE} takes the specific form
\begin{align}
\label{eq:WFSDE}
\dd X_t &= \frac{1}{2}\left[\tup(1-X_t) - \tdown X_t + sX_t(1-X_t)\eta(X_t)\right]\dd t + \sqrt{X_t(1-X_t)} \dd W_t,
\end{align} 
where $\theta = (\tup, \tdown,s) \in [0,\infty) \times [0,\infty) \times \bbR = \Theta$, and $\eta:[0,1]\to\bbR$ is a Lipschitz continuous function (assumed for identifiability of $s$ not to be identically zero). Here $\gs(X_t)\dd W_t$ captures the effects of random mating while $\gm_\gq(X_t) \dd t$ captures the effects of recurrent mutation between two alternative alleles at rates $\tup/2$ and $\tdown/2$ (that is, between the allele whose frequency is currently $X_t$ and the other allele whose frequency is currently $1-X_t$), as well as the effects of natural selection with selection parameter $s$. The function $\eta$ captures any frequency-dependent effects of selection. For example, \emph{genic} or \emph{haploid} selection has $\eta(x) = 1$ while a standard model of diploid selection has $\eta(x) = x + h(1-2x)$ with dominance $h$. It is well known that these assumptions satisfy the requirements for \eqref{eq:WFSDE} to have a unique, strong solution in the space of continuous paths $C([0,\infty),[0,1])$ \citep[e.g.][]{sat:1976:JMSJ}.

We first address the question of the mutual arrangement of a pair of Wright--Fisher diffusion path measures $\bPT{\gq_0}{T}$ and $\bPT{\gq}{T}$ corresponding to solutions of \eqref{eq:WFSDE} up to time $T$. A celebrated result due to \citet{daw:1978} \citep[see also][Ch.\ 7.6]{eth:2000} states that if $\gq_0 = (\tup,\tdown,0)$ and $\gq = (\tup,\tdown,s)$ then $\bPT{\gq_0}{T}$ and $\bPT{\gq}{T}$, respectively the \emph{neutral} and \emph{non-neutral} models, are equivalent if $T$ is finite. However, the relationship between $\bPT{\gq}{T}$ and $\bPT{\gq_0}{T}$ when the \emph{mutation} parameters $\tup$ and $\tdown$ are also allowed to differ seems to be an open problem. To determine the relationship in this more general setting is a much more complicated task since the Novikov condition (equation \eqref{eq:novikov} below), the natural tool to prove equivalence when it is merely the selection parameter differing, and other conditions similar to Novikov's, no longer hold. Indeed our intuitions about the qualitative nature of sample paths of $\bPT{\gq}{T}$ immediately rules out equivalence in general, which we can see by recalling Feller's boundary classification:
\begin{itemize}
\item If $\tup = 0$ then the endpoint at 0 is exit (as in exit-not-entrance),
\item if $0 < \tup < 1$ then the endpoint at 0 is regular and instantaneously reflecting, and
\item if $\tup \geq 1$ then the endpoint at 0 is entrance (as in entrance-not-exit).
\end{itemize}
A similar classification applies for $\tdown$ and the endpoint at 1. For example, we can immediately see that if $\tup_0 \geq 1$ and $\tup_1 < 1$ then $\bPT{\tup_1,\tdown,s}{T}$ cannot be absolutely continuous with respect to $\bPT{\tup_0,\tdown,s}{T}$ because the event that a sample path hits the 0-endpoint before time $T$ has positive probability under $\bPT{\tup_1,\tdown,s}{T}$ but not under $\bPT{\tup_0,\tdown,s}{T}$. The boundary classifications alone do not however provide a complete picture: for example, they do not tell us whether $\bPT{\tup_1,\tdown,s}{T}$ is absolutely continuous with respect to $\bPT{\tup_0,\tdown,s}{T}$ when both $\tup_0,\tup_1 \in (0,1)$. As we discover below, this relationship also fails as long as $\tup_0 \neq \tup_1$.

To address questions such as these we exploit the theory of \emph{separating times} \citep{che:uru:2006, mij:uru:2012:PTRF} in order to completely characterise the mutual arrangement of two Wright--Fisher diffusion path measures. Roughly, a mutual separating time is a stopping time before which two measures are equivalent and after which they are mutually singular. This theory has recently been extended to allow for boundary behaviour as rich as that of the Wright--Fisher diffusion \citep{cri:uru:arxiv:v2}. We use this to derive the separating time for two Wright--Fisher diffusion measures and, as might have been predicted from the discussion above, show that they are closely related to the first hitting time of an endpoint of the state space. Although separating time theory, relying heavily on the concepts of the scale function and speed measure of a diffusion, is inherently one-dimensional, we can extend some of these results to multidimensional Wright--Fisher diffusions (indeed to the infinite-dimensional Fleming--Viot process), by exploiting its convenient projective properties.

We study in closer detail some sample path properties on hitting the boundary by looking at the case when the diffusion is initiated from the endpoint of its state space, $x_0 \in \{0,1\}$, even if that endpoint is of entrance type. Loosely, we show that two path measures differing in their mutation parameters are `instantaneously singular' on emergence from the boundary. To achieve this for a reflecting endpoint we derive a new zero-one-type law for the class of additive functionals constructed from negative-degree monomials of the diffusion. For an entrance boundary, on the other hand, we use a comparison argument. The appropriate comparison is not with a Brownian motion but with a \emph{squared Bessel process}. This has major advantages because the squared Bessel process has essentially the same boundary behaviour at 0 as the Wright--Fisher diffusion and is very well studied \citep[e.g.][Ch.~XI]{rev:yor:1999}. There exist numerous results for us to exploit when studying the squared Bessel process owing to, for example, its close relationship to the branching property and to the radial component of Brownian motion. The mutual arrangement of pairs of (squared) Bessel processes, and associated zero-one laws, have been studied by \citet{shi:wat:1973}, \citet{pit:yor:1981}, \citet{xue:1990}, \citet{ove:1998}, \citet{che:2001}, and \citet{che:uru:2006}, among others.

Finally, as an application we address the problem of inference by deriving the maximum likelihood estimator (MLE) for the three parameters $(\tup,\tdown,s)$ in \eqref{eq:WFSDE} based on a continuously observed sample path $X^T$. 
The MLE for $s$ alone is studied in detail by \citet{wat:1979} and \citet{san:etal:2022} but to my knowledge the expression for the joint estimator for both mutation and selection is new. Studying the subproblem of joint estimation of $(\tup,\tdown)$ in detail, I show that the estimator, after introducing a suitable adjustment for the possibility of separation in finite time, is strongly consistent for certain submodels, otherwise there is no consistent estimator. For certain submodels we also derive a central limit theorem. These properties are all closely related to the separating time of the underlying path measures, where on separation we find that one of the two mutation parameters `causing' the separation becomes discoverable and the error of the estimator for that parameter automatically collapses to 0.

This paper is structured as follows. In \sref{sec:prelim} we introduce some notation and recall the notion of the \emph{separating time} between two measures. \sref{sec:separating} contains a detailed study of the separating time for the Wright--Fisher diffusion; a complete characterization is given in \tref{tab:S}. \sref{sec:FV} provides some extensions of our results on separating times to the Fleming--Viot process, an infinite-dimensional analogue of the Wright--Fisher diffusion. In \sref{sec:bessel} we tackle the special case of initiation from the boundary, $x_0\in\{0,1\}$, which requires new techniques---in particular, comparison with a squared Bessel process whose many known properties we can exploit. Finally, in \sref{sec:inference} we study the problem of statistical inference from a continuously observed Wright--Fisher diffusion, deriving the maximum likelihood estimator and its asymptotic properties.

\section{Notation and preliminaries}
\label{sec:prelim}
\subsection{Separating times}
We first summarise the notion of (mutual) separating time for two probability measures as introduced by \citet{che:uru:2006}; see also \citet{mij:uru:2012:PTRF}. Although much of the following discussion holds for any pair of measures, for notational convenience we focus on the case where both come from a given parametric family $\{\bP{\gq}:\:\gq\in\gQ\}$.

Let $\bP{\gq_1}$ and $\bP{\gq_2}$ be two probability measures on a measurable space $(\Omega,\cF)$ with a right-continuous filtration $(\cF_t)_{t\in[0,\infty)}$. For a stopping time $\tau$ we define
\[
\cF_{\tau} := \left\{A \in \cF:\: A\cap\{\tau \leq t\} \in \cF_t \text{ for any }t\in[0,\infty)\right\},
\]
so that $\cF_\infty = \cF$. The restriction of $\bP{\gq}$ to $\cF_\tau$ is denoted $\bPT{\gq}{\tau}$.

It is convenient to augment the time axis $[0,\infty]$ with an additional point $\gd$ and to extend the total order on $[0,\infty]$ by setting $\infty < \gd$. The extra point will allow us to distinguish pairs of measures that separate ``at infinity'' (with separating time $\infty$) from those that ``never separate'' (separating time $\gd$).
\begin{definition}
An \emph{extended stopping time} is a map $\tau:\gO \to [0,\infty]\cup\{\gd\}$ such that $\{\tau \leq t\}\in \cF_t$ for any $t\in [0,\infty]$.
\end{definition}

\begin{definition}
A \emph{time separating $\bP{\gq_2}$ from $\bP{\gq_1}$} is an extended stopping time $S$ that satisfies
\begin{align*}
\bPT{\gq_2}{\tau} &\ll \bPT{\gq_1}{\tau} \text{ on the set }\{\tau < S\},\\
\bPT{\gq_2}{\tau} &\perp \bPT{\gq_1}{\tau} \text{ on the set }\{\tau \geq S\},
\end{align*}
for all stopping times $\tau$.
\end{definition}
A time separating $\bP{\gq_2}$ from $\bP{\gq_1}$ exists and is $\bP{\gq_2}$-a.s.~unique. 
\begin{definition}
A \emph{(mutual) separating time for $\bP{\gq_1}$ and $\bP{\gq_2}$} is an extended stopping time $S$ that satisfies
\begin{align*}
\bPT{\gq_2}{\tau} &\sim \bPT{\gq_1}{\tau} \text{ on the set }\{\tau < S\},\\
\bPT{\gq_2}{\tau} &\perp \bPT{\gq_1}{\tau} \text{ on the set }\{\tau \geq S\},
\end{align*}
for all stopping times $\tau$.
\end{definition}
A mutual separating time for $\bP{\gq_1}$ and $\bP{\gq_2}$ exists and is $\bP{\gq_1},\bP{\gq_2}$-a.s.~unique. It is also clearly a time separating $\bP{\gq_2}$ from $\bP{\gq_1}$.

The utility of studying the separating time $S$ in order to find the mutual arrangement of two measures comes from the following equivalences \citep[Lemma 2.1]{che:uru:2006}:
\begin{itemize}
\item $\bP{\gq_2}$ and $\bP{\gq_1}$ are \emph{equivalent}, $\bP{\gq_2}\sim\bP{\gq_1}$, if and only if $S=\gd$ holds $\bP{\gq_1},\bP{\gq_2}$-a.s.
\item $\bP{\gq_2}$ is \emph{absolutely continuous} with respect to $\bP{\gq_1}$, $\bP{\gq_2} \ll \bP{\gq_1}$, if and only if $S = \gd$ holds $\bP{\gq_2}$-a.s.
\item $\bP{\gq_2}$ is \emph{locally equivalent}, that is $\bPT{\gq_2}{T} \sim \bPT{\gq_1}{T}$ for each $T\in[0,\infty)$ and written $\bP{\gq_2}\overset{\text{loc}}{\sim} \bP{\gq_1}$, if and only if $S \geq \infty$ holds $\bP{\gq_1},\bP{\gq_2}$-a.s.
\item $\bP{\gq_2}$ is \emph{locally absolutely continuous}, that is $\bPT{\gq_2}{T} \ll \bPT{\gq_1}{T}$ for each $T\in[0,\infty)$ and written $\bP{\gq_2}\overset{\text{loc}}{\ll} \bP{\gq_1}$, if and only if $S \geq \infty$ holds $\bP{\gq_2}$-a.s.
\item $\bP{\gq_2}$ and $\bP{\gq_1}$ are \emph{mutually singular}, $\bP{\gq_2} \perp \bP{\gq_1}$, if and only if $S \leq \infty$ holds $\bP{\gq_1},\bP{\gq_2}$-a.s.\ and if and only if $S \leq \infty$ holds $\bP{\gq_1}$-a.s.
\item $\bPT{\gq_2}{0}$ and $\bPT{\gq_1}{0}$ are \emph{mutually singular}, $\bPT{\gq_2}{0} \perp \bPT{\gq_1}{0}$, if and only if $S =0$ holds $\bP{\gq_1},\bP{\gq_2}$-a.s.\ and if and only if $S=0$ holds $\bP{\gq_1}$-a.s.
\end{itemize}

Our interest is in the separating time for a diffusion with state space $I$. Separating times are closely related to the first hitting time $T_z := \inf\{t\geq 0:\: X_t = z\}$ of certain `separating points' of the state space, and much of the difficulty arises in determining whether or not a boundary point of $I$ is separating. Recall that accessible (regular and exit) endpoints are included in the state space $I$ while inaccessible endpoints (entrance and natural) are not \citep[Ch.\ 16.7, p365--370]{bre:1968}. For an interval state space $I\subseteq [-\infty,\infty]$ with interior $I^\circ$ and closure $\bar{I}$, the boundary is $\partial I = \bar{I}\setminus I^\circ$ whatever the endpoint classifications are.

Following \citet{che:uru:2006}, to account for the presence of $\gd$ we distinguish between $T_z$, which uses the convention $\inf\emptyset = \infty$, and $\overline{T}_z$, which uses the convention $\overline{\inf}\emptyset = \gd$. The idea is that if $z$ is a separating point then $S \leq \overline{T}_z$.

To define a separating point for a general diffusion on $I$ characterised by its scale function and speed measure is quite involved \citep{cri:uru:arxiv:v2} and so we defer it to Appendix \ref{sec:separating-points}. The definition allows for diffusions with boundary behaviour of any classification. However, for a diffusion satisfying an It\^o SDE of the form \eqref{eq:SDE} with either inaccessible or absorbing boundaries, the definition of a non-separating interior point is more straightforward and can be recalled here.
\begin{definition} \label{def:non-separating}
For a diffusion of the form \eqref{eq:SDE} (up to the hitting time of the boundary), a point $z\in I^\circ$ is \emph{non-separating}, or \emph{good}, if there exists an open neighbourhood $N(z) \subset I$ of $z$ such that $\int_{N(z)}[(\gm_{\gq_1} - \gm_{\gq_0})^2/\gs^4](y) \dd y < \infty$, i.e.\ if $(\gm_{\gq_1} - \gm_{\gq_0})^2/\gs^4$ is locally integrable at $z$. Otherwise it is \emph{separating}.
\end{definition}
The definition is symmetric in $\gq_0$ and $\gq_1$. It is a slight specialisation of the one given in \citet{che:uru:2006} and \citet{mij:uru:2012:PTRF}; had we permitted the two measures to differ in their diffusion coefficients then we would additionally require that $\gs^2_{\gq_0} = \gs^2_{\gq_1}$ almost everywhere in $N(z)$. In Appendix \ref{sec:speed-and-scale} we give some intuition as to why it is the function $(\gm_{\gq_1} - \gm_{\gq_0})^2/\gs^4$ whose local integrability matters.

\subsection{Radon--Nikodym derivative}
Let $\Omega = C([0,\infty),I)$ denote the set of continuous functions from $[0,\infty)$ to $I$. Hereonwards we will work on the canonical filtered space $(\Omega, \cF, (\cF_t)_{t\in[0,\infty)})$ with $X_t(\omega) = \omega(t)$ and right-continuous filtration $\cF_t = \bigcap_{\ge > 0}\gs(X_s:\: s\in[0,t+\ge])$, $\cF := \bigvee_{t\in[0,\infty)} \cF_t$. Let $\bP{\gq_0}$ denote a probability measure corresponding to \eqref{eq:SDE} with parametrisation $\gq_0$. Of crucial importance in comparing solutions to \eqref{eq:SDE} for different $\theta$ is the Girsanov theorem and its extensions which tell us that the following conditions are equivalent:
\begin{enumerate}[(i)]
\item The stochastic exponential $Z = (Z_t)_{t\in[0,\infty)}$ given by
\begin{equation}
\label{eq:Z_T}
Z_t :=\exp\left(\int_0^t \uu(X_u) \dd W_u - \frac{1}{2}\int_0^t \uu^2(X_u) \dd u\right),
\end{equation}
where $\uu$ is the solution to
\begin{align}
\label{eq:b-int}
\gs(X_t)\uu(X_t) &= \gm_{\gq_1}(X_t) - \gm_{\gq_0}(X_t), & 0 &\leq t \leq T,
\end{align}
is a $\bP{\gq_0}$-martingale;
\item The measure $\bP{\gq_1}$ given by $\bPT{\gq_1}{T}(A) = \bbE_{\gq_0}[Z_T \bbI_A]$, $A\in\cF_T$, is well defined and in particular it satisfies, for each $T\in[0,\infty)$,
\begin{equation}
\label{eq:equivalent}
\bPT{\gq_1}{T} \ll \bPT{\gq_0}{T}
\end{equation}
and
\begin{equation}
\label{eq:RND}
\frac{\dd \bPT{\gq_1}{T}}{\dd \bPT{\gq_0}{T}} = Z_T \qquad\text{ on }\cF_T;
\end{equation}
\item For each $T\in[0,\infty)$:
\begin{equation}
\label{eq:explode}
\bPT{\gq_1}{T}\left(\int_0^T \uu^2(X_t) \dd t < \infty\right) = 1.
\end{equation}
\end{enumerate}
See for example \citet{hob:rog:1998}, \citet{ruf:2015}, and \citet{cri:gla:2018}, who develop increasingly refined versions of this statement.

We will see below that the requirement that these conditions hold is in general too strong. It is precisely the language of separation times that help us to understand how they can fail, with $\bPT{\gq_1}{T} \ll \bPT{\gq_0}{T}$ holding not for every $T$ but only up to a separating time $S$ whose distribution we aim to find. The conditions above correspond to the (restrictive) distribution $\bP{\gq_1}(S \geq \infty) = 1$. Equivalence between this statement and $Z_T$'s being a $\bP{\gq_0}$-martingale was shown under the Engelbert--Schmidt conditions by \citet{mij:uru:2012:PTRF} and more generally by \citet{des:etal:2021} \citep[the latter authors did not use separating times, but their argument can be phrased in those terms using Corollary 2.15 of][]{cri:uru:arxiv:v2}.

To apply Girsanov's theorem, the goal is to verify whether or not $Z$ is a $\bP{\gq_0}$-martingale. Since $Z$ is a non-negative local martingale and hence a supermartingale, this task boils down to checking whether $\bbE_{\gq_0}(Z_T) = 1$, a statement which makes sense even for $T=\infty$ in which case $Z$ is a uniformly integrable (u.i.)\ martingale and we can let $T=\infty$ in \eqref{eq:equivalent} and \eqref{eq:RND}. 
There has been much research activity in finding simple sufficient conditions for $Z$ to be a martingale \citep[see][Ch.~VIII]{rev:yor:1999}; 
 perhaps the most well known is \emph{Novikov's condition} which states that $Z$ is a $\bP{\gq}$-martingale (resp.~u.i.\ $\bP{\gq}$-martingale) if 
\begin{equation}
\label{eq:novikov}
\bbE_\gq\left[\exp\left(\frac{1}{2}\int_0^T \uu^2(X_t) \dd t\right)\right] < \infty
\end{equation}
for each $T \in [0,\infty)$ (resp.~for $T=\infty$).

\section{Separating time of the Wright--Fisher diffusion} \label{sec:separating}
We begin this section by justifying our claimed equivalence between the neutral and non-neutral models. \citet{daw:1978} studies a measure-valued setting which includes a Fleming--Viot process $(\mu_t:\: t\geq 0)$ with type space $E = \bbR^d$ and mutation operating as a Brownian motion on $E$. It is possible to replace the mutation operator with one that is parent-independent (see \eqref{eq:FV2} below), and his argument still holds. Then we can replace the type space with one such that $|E| = 2$ to recover a one-dimensional version of his result, as follows. In any case we can give a direct proof.
\begin{theorem}\label{thm:daw}
For the Wright--Fisher diffusion \eqref{eq:WFSDE} we have $\bPT{(\tup,\tdown,s)}{T} \sim \bPT{(\tup,\tdown,0)}{T}$ for each $T \in [0,\infty)$, and
\begin{equation}
\label{eq:selection}
\frac{\dd \bPT{(\tup,\tdown,s)}{T}}{\dd \bPT{(\tup,\tdown,0)}{T}} = \exp\left(\frac{s}{2}\int_0^T \sqrt{X_t(1-X_t)}\eta(X_t) \dd W_t - \frac{s^2}{8}\int_0^T X_t(1-X_t)\eta^2(X_t) \dd t\right).
\end{equation}
\end{theorem}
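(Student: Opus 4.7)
The plan is to prove this by a direct application of Girsanov's theorem, which I expect to go through cleanly because the Wright--Fisher diffusion coefficient vanishes at the boundary in exactly the right way to keep the Girsanov exponent well behaved. The two SDEs in question share the same diffusion coefficient $\gs(x)=\sqrt{x(1-x)}$ and differ only in drift through the term $\frac{s}{2}x(1-x)\eta(x)$, so the appropriate solution to \eqref{eq:b-int} is
\[
\uu(x) \;=\; \frac{\gm_{(\tup,\tdown,s)}(x) - \gm_{(\tup,\tdown,0)}(x)}{\gs(x)} \;=\; \frac{s}{2}\sqrt{x(1-x)}\,\eta(x),
\]
defined on $(0,1)$ and extended by continuity to vanish at $\{0,1\}$. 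The associated stochastic exponential $Z_t$ from \eqref{eq:Z_T} is precisely the right-hand side of \eqref{eq:selection}, so the content is reduced to showing that $Z$ is a true $\bP{(\tup,\tdown,0)}$-martingale on $[0,T]$ and that the resulting measure is equivalent, not merely absolutely continuous, to $\bPT{(\tup,\tdown,0)}{T}$.

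Next I would verify Novikov's condition \eqref{eq:novikov}. Because $\eta$ is Lipschitz on the compact set $[0,1]$ it is bounded, and $x(1-x) \leq 1/4$, so $\uu^2(x) = \frac{s^2}{4}x(1-x)\eta^2(x)$ is uniformly bounded on $[0,1]$ by some constant $M=M(s,\eta)$. Hence $\int_0^T \uu^2(X_t)\dd t \leq MT$ deterministically, and $\bbE_{(\tup,\tdown,0)}[\exp(\tfrac12\int_0^T \uu^2(X_t)\dd t)] \leq e^{MT/2} < \infty$ for every finite $T$. This is the easy case of Novikov and is where all the work is absorbed by the shape of the diffusion coefficient; no boundary issue arises because $\uu$ actually degenerates at $0$ and $1$. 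By the Girsanov theorem $Z$ is a $\bP{(\tup,\tdown,0)}$-martingale on $[0,T]$ and the measure $\widetilde{\bP}$ defined by $\dd\widetilde{\bP}/\dd\bPT{(\tup,\tdown,0)}{T} = Z_T$ is a probability measure absolutely continuous with respect to $\bPT{(\tup,\tdown,0)}{T}$.

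It remains to identify $\widetilde{\bP}$ with $\bPT{(\tup,\tdown,s)}{T}$. Under $\widetilde{\bP}$, the process $\widetilde{W}_t := W_t - \int_0^t \uu(X_u)\dd u$ is a Brownian motion on $[0,T]$, and substituting gives
\[
\dd X_t \;=\; \bigl[\gm_{(\tup,\tdown,0)}(X_t) + \uu(X_t)\gs(X_t)\bigr]\dd t + \gs(X_t)\dd\widetilde{W}_t \;=\; \gm_{(\tup,\tdown,s)}(X_t)\dd t + \gs(X_t)\dd\widetilde{W}_t,
\]
so $X$ solves the $(\tup,\tdown,s)$-SDE under $\widetilde{\bP}$. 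Uniqueness in law for \eqref{eq:WFSDE} \citep[e.g.][]{sat:1976:JMSJ} then forces $\widetilde{\bP} = \bPT{(\tup,\tdown,s)}{T}$ on $\cF_T$, yielding $\bPT{(\tup,\tdown,s)}{T} \ll \bPT{(\tup,\tdown,0)}{T}$ with the claimed Radon--Nikodym derivative. Finally, because $Z_T > 0$ $\bP{(\tup,\tdown,0)}$-a.s., any $\bP{(\tup,\tdown,0)}$-null set is also $\bPT{(\tup,\tdown,s)}{T}$-null, giving the reverse absolute continuity and hence the equivalence $\bPT{(\tup,\tdown,s)}{T} \sim \bPT{(\tup,\tdown,0)}{T}$. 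There is essentially no technical obstacle here --- the only thing one might have worried about is explosion of $\uu$ near the boundary, but boundedness of $\uu$ rules this out, which is exactly why selection alone, unlike mutation, produces only locally equivalent measures (and indeed globally equivalent measures when restricted to $[0,T]$).
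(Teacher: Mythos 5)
Your proposal is correct and takes essentially the same route as the paper: boundedness of $\uu(x)=\frac{s}{2}\sqrt{x(1-x)}\eta(x)$ (since $\eta$ is continuous on $[0,1]$) makes Novikov's condition \eqref{eq:novikov} immediate, and Girsanov's theorem then yields \eqref{eq:selection} and the absolute continuity, the only cosmetic difference being that the paper obtains the reverse direction by interchanging the roles of the two parametrisations whereas you use strict positivity of $Z_T$. One small slip in wording: your sentence ``any $\bP{(\tup,\tdown,0)}$-null set is also $\bPT{(\tup,\tdown,s)}{T}$-null'' states the forward direction $\bPT{(\tup,\tdown,s)}{T}\ll\bPT{(\tup,\tdown,0)}{T}$ rather than the reverse, but the positivity argument you invoke does correctly deliver $\bPT{(\tup,\tdown,0)}{T}\ll\bPT{(\tup,\tdown,s)}{T}$, so the equivalence stands.
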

\begin{proof}
If $\gq_0 = (\tup,\tdown,0)$ and $\gq_1 = (\tup,\tdown,s)$ then $\uu(x) = \frac{s}{2}\sqrt{x(1-x)}\eta(x)$ solves \eqref{eq:b-int} and is bounded on $[0,1]$, by $\pm K$ say, since $\eta$ is continuous on $[0,1]$ and therefore bounded. Hence
\begin{equation*}
\bbE\left[\exp\left(\frac{1}{2}\int_0^T \uu^2(X_t) \dd t\right)\right] \leq \exp\left(\frac{1}{2}K^2 T\right) < \infty
\end{equation*}
and Novikov's condition \eqref{eq:novikov} holds under $\bPT{(\tup,\tdown,s)}{T}$ for any $(\tup,\tdown,s)$. (Similarly, boundedness of $b$ implies \eqref{eq:explode} under any $\bPT{(\tup,\tdown,s)}{T}$.) The expression \eqref{eq:selection} and $\bPT{(\tup,\tdown,s)}{T} \ll \bPT{(\tup,\tdown,0)}{T}$ follow by Girsanov's theorem. Finally, $\bPT{(\tup,\tdown,0)}{T} \ll \bPT{(\tup,\tdown,s)}{T}$ follows by similar arguments, reversing the roles of $\gq_1$ and $\gq_0$.
\end{proof}
\begin{remark}
From a statistical perspective this result can be seen as disappointing. Given a sample path of finite length, even if observed continuously and without error, it is not possible to say with certainty whether or not natural selection is acting on the allele.
\end{remark}
\begin{remark}
As is shown below, \thmref{thm:daw} fails for $T=\infty$ if the model is ergodic ($\tup,\tdown > 0$). This is consistent with our intuition that given a sample path $X$ of infinite length, by the ergodic theorem we are effectively observing the speed measure from which we can identify $s$ without error.
\end{remark}

Our next goal is to generalize \thmref{thm:daw} to allow the mutation parameters to differ in addition to the selection parameter by computing the separating time for two measures $\bP{\gq_0}$ and $\bP{\gq_1}$ associated with \eqref{eq:WFSDE}. Recall that its state space $I$ is an interval which always has interior $I^\circ = (0,1)$, closure $\bar{I} = [0,1]$, and boundary $\partial I = \bar{I}\setminus I^\circ = \{0,1\}$, but $I$ itself it may be open, half-open, or closed in $\bbR$ depending on the accessibility of its boundaries: if $\tup < 1$ then $0\in I$ is accessible, otherwise $0\notin I$;  if $\tdown < 1$ then $1\in I$ is accessible, otherwise $1\notin I$. To emphasise this dependence we write $I_\gq$.
\begin{theorem}
\label{thm:CU2022}
Let $\bP{\gq_i}$, $i=0,1$, be the laws of a Wright--Fisher diffusion \eqref{eq:WFSDE} under respective parametrisations $\gq_0 = (\tup_0,\tdown_0,s_0)$, $\gq_1 = (\tup_1,\tdown_1,s_1)$, and suppose $x_0 \in I_{\gq_0}\cap I_{\gq_1}$. Then, under $\bP{\gq_1}$, the separating time $S$ for $(\bP{\gq_0},\bP{\gq_1})$ is as given in \tref{tab:S}.

Furthermore, in those cases for which $\bP{\gq_1}(S \geq \infty) = 1$, the stochastic exponential given by \eqref{eq:Z_T} with
\begin{align*}
b(x) &= \frac{\mu_{\gq_1}(x) - \mu_{\gq_0}(x)}{\gs(x)}\\
&= \frac{\tup_1-\tup_0}{2}\sqrt{\frac{1-x}{x}} + \frac{\tdown_1-\tdown_0}{2}\sqrt{\frac{x}{1-x}} + \frac{s_1-s_0}{2}\sqrt{x(1-x)}\eta(x), & x\in I^\circ,
\end{align*}
is a $\bP{\gq_0}$-martingale and hence satisfies \eqref{eq:RND} for every $T\in[0,\infty)$.
\end{theorem}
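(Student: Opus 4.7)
The plan is to invoke the general theory of separating times for one-dimensional diffusions developed in \citet{che:uru:2006} and extended to arbitrary boundary behavior in \citet{cri:uru:arxiv:v2}. That theory reduces the computation of $S$ to the identification of the \emph{separating points} of $\bar{I} = [0,1]$, after which $S$ is the first hitting time of such a point in the $\overline{\inf}$ sense. The interior analysis is immediate from \dref{def:non-separating}, since
\begin{equation*}
\mu_{\gq_1}(x) - \mu_{\gq_0}(x) = \tfrac{1}{2}\bigl[(\tup_1-\tup_0)(1-x) - (\tdown_1-\tdown_0)x + (s_1-s_0)x(1-x)\eta(x)\bigr]
\end{equation*}
is a polynomial in $x$ and $\gs^4(x) = x^2(1-x)^2$, so $(\mu_{\gq_1}-\mu_{\gq_0})^2/\gs^4$ is continuous and hence locally integrable at every $z\in(0,1)$. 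Consequently all candidates for separating points lie in $\{0,1\}$, and $S$ must be of the form $\overline{T}_0 \wedge \overline{T}_1$, $\overline{T}_0$, $\overline{T}_1$, or $\gd$.

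\textbf{Boundary analysis.} The core of the proof is to decide, for each endpoint, whether it is separating. By the symmetry $x\leftrightarrow 1-x$, $\tup\leftrightarrow\tdown$ it suffices to analyze $0$. If $\tup_0 = \tup_1$, the leading $\sqrt{(1-x)/x}$ singularity in $b$ disappears and only bounded terms remain near $0$; a short calculation against the scale density $s'(x)\propto x^{-\tup_0}(1-x)^{-\tdown_0}$ and speed measure density $m'(x)\propto x^{\tup_0-1}(1-x)^{\tdown_0-1}$ shows that the integrability criteria in Appendix~\ref{sec:separating-points} are met, so $0$ is non-separating. If $\tup_0 \neq \tup_1$, I split into three subcases according to Feller's classification: (a) exactly one of $\tup_0,\tup_1$ lies in $[1,\infty)$, so the paths hitting $0$ have positive measure under one law and zero under the other, forcing $0$ to be separating trivially; (b) both lie in $(0,1)$ (regular-reflecting on both sides); (c) both lie in $[1,\infty)$ (entrance on both sides). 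In (b) and (c) the singularity $b^2(x) \sim (\tup_1-\tup_0)^2/(4x)$ as $x\downarrow 0$ produces a divergent scale/speed-weighted integral in the criterion of Appendix~\ref{sec:separating-points}, certifying $0$ as separating. The same case analysis at $1$ and at the $\gd$-valued combinations of outcomes assembles the full entries of \tref{tab:S}.

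\textbf{The martingale assertion.} Once the table has been derived, the second claim is a direct consequence of the equivalence of conditions (i)--(iii) recorded after \eqref{eq:explode}. Whenever the entry gives $\bP{\gq_1}(S \geq \infty) = 1$, this is precisely a restatement of (iii), because $b^2$ is only singular at the boundary and $S \geq \infty$ $\bP{\gq_1}$-a.s.\ means $\bP{\gq_1}$-a.s.\ sample paths do not approach a separating endpoint in finite time; the implication (iii)$\Rightarrow$(i), due to \citet{mij:uru:2012:PTRF} in the Engelbert--Schmidt setting and to \citet{des:etal:2021} in the generality needed here, then gives that $Z$ is a $\bP{\gq_0}$-martingale and hence \eqref{eq:RND}. \emph{The main obstacle} will be subcases (b) and (c) of the boundary analysis above: unlike case (a), here both measures are compatible with arbitrarily close approaches to the boundary, so mutual singularity of the two path measures at $0$ must be extracted purely from the quantitative integrability failure of $b^2$ against the speed and scale measures. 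This is exactly the regime in which classical Novikov-type sufficient conditions break down -- $b^2$ is not even locally integrable at $0$ -- and the extended separating-point framework of \citet{cri:uru:arxiv:v2} is indispensable.
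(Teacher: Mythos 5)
Your identification of the separating points is right in spirit: interior points are good by local integrability of $(\gm_{\gq_1}-\gm_{\gq_0})^2/\gs^4$, and when $\tup_0\neq\tup_1$ the divergence $b^2(x)\sim(\tup_1-\tup_0)^2/(4x)$, weighted against speed and scale, is the same logarithmic-divergence computation that the paper carries out in Appendix~\ref{sec:separating-points}; your route to the martingale claim via \citet{des:etal:2021} also matches the paper. The genuine gap is in the step from separating points to the separating time. You assert that $S$ must be one of $\overline{T}_0\wedge\overline{T}_1$, $\overline{T}_0$, $\overline{T}_1$, or $\gd$. This is false, and it breaks exactly the rows of \tref{tab:S} with $S=\infty$: cases {\sc (iv)}, {\sc (xii)}, {\sc (xiii)}, {\sc (xiv)}. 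Theorem 2.13 of \citet{cri:uru:arxiv:v2}, as used in the paper, gives $S=U\wedge V\wedge R$, where $U$ (resp.\ $V$) equals $T_0$ (resp.\ $T_1$) only on the event $\liminf_{t\uparrow T_0}X_t=0$ (resp.\ $\limsup_{t\uparrow T_1}X_t=1$) --- events that can occur with $T_0$ or $T_1$ \emph{infinite} --- and $R=\infty$ when both endpoints are non-separating and reflecting. Two consequences your formula misses: first, in case {\sc (xii)} ($\tup_0=\tup_1>0$, $\tdown_1\geq1$, $\tdown_0\neq\tdown_1$) the path never hits $1$, so $\overline{T}_1=\gd$ and your formula yields $S=\gd$, i.e.\ $\bP{\gq_0}\sim\bP{\gq_1}$; in fact $\limsup_{t\uparrow T_1}X_t=1$ a.s.\ by \eqref{eq:g}, forcing $V=T_1=\infty$, so the measures separate at infinity and are only \emph{locally} equivalent. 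Second, in case {\sc (iv)} (both endpoints reflecting, equal mutation rates, $s_0\neq s_1$) the set of separating points is empty, so you would again conclude $S=\gd$; but $R=\infty$, and equivalence must indeed fail at infinite horizon since the ergodic theorem identifies $s$ from the stationary distribution. Your proposal thus conflates ``separates at $\infty$'' with ``never separates,'' which is precisely the distinction the extra point $\gd$ was introduced to track, and which drives the consistency analysis of \sref{sec:inference}.

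Two secondary inaccuracies. You claim that $\tup_0=\tup_1$ alone makes $0$ non-separating, but the half-good condition \eqref{eq:half-good} requires $S_{\gq_1}(0)\in\bbR$, which fails when $\tup_1\geq1$; the paper (Remark 3.5) accordingly classifies $0$ as separating whenever $\tup_0=\tup_1\geq1$. On its own this is harmless since then $T_0=\infty$ under both laws, but combined with your hitting-time-only formula it compounds the error above. And assembling the fourteen rows is not automatic once the separating points are known: it requires the hitting-time and path properties \eqref{eq:a}--\eqref{eq:g}. Case {\sc (x)} illustrates why --- there $A=\{1\}$, yet absorption at $0$ prevents the path from approaching $1$, so $V=\gd$ and $S=\gd$ despite the presence of a separating point; your sketch does not engage with this case analysis at all.
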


\begin{table}[t]
\begin{center}
\begin{tabular}{rrrrrrcc}
\hline
Case & $\tup_1$ & $\tdown_1$ & $\tup_0$ & $\tdown_0$ & $s_0$ & $A$ & $S$\\
\hline
{\sc (i)} & $[0,\infty)$ & $[0,\infty)$ & $\tup_1$ & $\tdown_1$ & $s_1$ & $\emptyset$ & $\gd$\\
{\sc (ii)} &$0$ & $(0,\infty)$ & $0$ & $\tdown_1$ & $\bbR\setminus\{s_1\}$ & $\emptyset$ & $\gd$\\
{\sc (iii)} &$(0,\infty)$ & $0$ & $\tup_1$ & $0$ & $\bbR\setminus\{s_1\}$ & $\emptyset$ & $\gd$\\
{\sc (iv)} &$(0,\infty)$ & $(0,\infty)$ & $\tup_1$ & $\tdown_1$ & $\bbR\setminus\{s_1\}$ & $\emptyset$ & $\infty$\\
{\sc (v)} &$0$ & $0$ & $(0,\infty)$ & $0$ & $\bbR$ & $\{0\}$ & $\overline{T}_0$\\
{\sc (vi)} &$0$ & $[0,1)$ & $0$ & $[0,\infty)\setminus\{\tdown_1\}$ & $\bbR$ & $\{1\}$ & $\overline{T}_1$\\
{\sc (vii)} &$[0,1$) & $[0,1)$ & $[0,\infty)\setminus\{\tup_1\}$ & $[0,\infty)\setminus\{\tdown_1\}$ & $\bbR$ & $\{0,1\}$ & $T_0\wedge T_1$\\
{\sc (viii)} &$[0,1)$ & $(0,\infty)$ & $[0,\infty)\setminus\{\tup_1\}$ & $\tdown_1$ & $\bbR$ & $\{0\}$ & $T_0$\\
{\sc (ix)} &$[0,1)$ & $[1,\infty)$ & $[0,\infty)\setminus\{\tup_1\}$ & $[0,\infty)\setminus\{\tdown_1\}$ & $\bbR$ & $\{0,1\}$ & $T_0$\\
{\sc (x)} &$0$ & $[1,\infty)$ & $0$ & $[0,\infty)\setminus\{\tdown_1\}$ & $\bbR$ & $\{1\}$ & $\gd$\\
{\sc (xi)} &$(0,1)$ & $(0,1)$ & $\tup_1$ & $[0,\infty)\setminus\{\tdown_1\}$ & $\bbR$ & $\{1\}$ & $T_1$\\
{\sc (xii)} &$(0,\infty)$ & $[1,\infty)$ & $\tup_1$ & $[0,\infty)\setminus\{\tdown_1\}$ & $\bbR$ & $\{1\}$ & $\infty$\\
{\sc (xiii)} &$[1,\infty)$ & $[1,\infty)$ & $[0,\infty)\setminus\{\tup_1\}$ & $[0,\infty)\setminus\{\tdown_1\}$ & $\bbR$ & $\{0,1\}$ & $\infty$\\
{\sc (xiv)} &$[1,\infty)$ & $[1,\infty)$ & $[0,\infty)\setminus\{\tup_1\}$ & $\tdown_1$ & $\bbR$ & $\{0\}$ & $\infty$\\
\hline
\end{tabular}
\end{center}
\caption{\label{tab:S}The separating points $A$ associated with Wright--Fisher path measures $\bP{(\tup_0,\tdown_0,s_0)}$ and $\bP{(\tup_1,\tdown_1,s_1)}$, and the separating time $S$ under $\bP{(\tup_1,\tdown_1,s_1)}$. In this table we assume without loss of generality that $\tup_1 \leq \tdown_1$. In the first five columns, the appearance of a set should be interpreted as ``$\tup_1 \in \cdot$'', and so on. There is no restriction on $s_1 \in \bbR$.}
\end{table}

\begin{remark}
Of course, the distribution of $S$ under $\bP{\gq_0}$ can be obtained from \tref{tab:S} by interchanging the roles of $\gq_0$ and $\gq_1$.
\end{remark}

To prove \thmref{thm:CU2022} we first need to find the separating points of the diffusion. To that end we calculate its scale function and speed measure by standard means (e.g.\ \citet[Ch.\ 3.3]{eth:2011} or \citet[Ex.\ 3.20, p311]{rev:yor:1999}). The scale function $S_\gq:I\to\bbR$ associated with \eqref{eq:WFSDE} is:
\begin{equation}
\label{eq:scale}
S_{\gq}(x) = C_\gq\int_d^x y^{-\tup}(1-y)^{-\tdown}e^{-sH(y)} \dd y, \qquad H(y) := \int_d^y \eta(z) \dd z,
\end{equation}
where $C_\gq > 0$ and $d\in (0,1)$ are constants we do not need to specify. Note that $H$ is continuous on $[0,1]$ since $\eta$ is. Next, the speed measure $m_\gq$ is, on $(0,1)$,
\begin{equation}
\label{eq:speed1}
m_\gq(\dd x) = C_\gq^{-1} x^{\tup-1}(1-x)^{\tdown-1}e^{sH(x)} \dd x,
\end{equation}
with the boundary behaviour manifested in the conditions:
\begin{align}
m_{\gq}(\{0\}) &= \begin{cases}
\infty & \tup = 0,\\
0 & \tup > 0,\end{cases} &
m_{\gq}(\{1\}) &= \begin{cases}
\infty & \tdown = 0,\\
0 & \tdown > 0.\end{cases}\label{eq:speed2}
\end{align}
If $X$ is ergodic we will choose $C_\gq = m_\gq([0,1])$ in \eqref{eq:speed1} so that $m_\gq$ is the stationary measure.

\begin{lemma}\label{lem:separating-points}
For the Wright--Fisher diffusion \eqref{eq:WFSDE} under laws $\bP{\gq_0}$ and $\bP{\gq_1}$:
\begin{enumerate}[(a)]
\item If $x\in (0,1)$ then $x$ is non-separating.
\item The endpoint 0 is non-separating if and only if $\tup_0 = \tup_1 < 1$.
\item The endpoint 1 is non-separating if and only if $\tdown_0 = \tdown_1 < 1$.
\end{enumerate}
\end{lemma}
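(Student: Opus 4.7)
My plan is to treat (a) via the concrete interior definition given in \dref{def:non-separating}, and to use the more involved boundary version from Appendix~\ref{sec:separating-points} for (b) and (c), reducing everything in each case to the near-endpoint asymptotics of $(\gm_{\gq_1}-\gm_{\gq_0})^2/\gs^4$ together with the behaviour of the scale function and speed measure in \eqref{eq:scale}--\eqref{eq:speed2}. Note first that
\begin{equation*}
\gm_{\gq_1}(x) - \gm_{\gq_0}(x) = \tfrac{1}{2}\left[(\tup_1-\tup_0)(1-x) - (\tdown_1-\tdown_0)x + (s_1-s_0)x(1-x)\eta(x)\right]
\end{equation*}
is bounded on $[0,1]$ (using $\eta$ Lipschitz, hence bounded), and $\gs^4(x) = x^2(1-x)^2$.

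For (a), fix $z\in(0,1)$ and take $N(z) = ((z/2),(1+z)/2)\subset I^\circ$. On $N(z)$ the factor $\gs^4$ is bounded below and the numerator above, so $(\gm_{\gq_1}-\gm_{\gq_0})^2/\gs^4$ is bounded, hence locally integrable; by \dref{def:non-separating}, $z$ is non-separating.

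For (b), I would first rule out $\tup_0 \neq \tup_1$ via a near-zero expansion. As $x\downarrow 0$,
\begin{equation*}
\gm_{\gq_1}(x) - \gm_{\gq_0}(x) = \tfrac{1}{2}(\tup_1-\tup_0) + O(x), \qquad \gs^4(x) \sim x^2,
\end{equation*}
so $(\gm_{\gq_1}-\gm_{\gq_0})^2/\gs^4 \sim (\tup_1-\tup_0)^2/(4x^2)$, which fails to be integrable at $0$ unless $\tup_0=\tup_1$; this forces $\tup_0=\tup_1$ as a necessary condition. Conversely, when $\tup_0 = \tup_1 = \tup$ the numerator collapses to $O(x^2)$ and the ratio is bounded near $0$. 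To finish, I would invoke the boundary criterion recalled in Appendix~\ref{sec:separating-points}: beyond local integrability it additionally demands that the two measures assign the same Feller classification (and the same integrability/divergence behaviour of the scale function and speed measure) to the endpoint. Plugging $\tup_0=\tup_1$ into \eqref{eq:scale}, \eqref{eq:speed1}, \eqref{eq:speed2}, the leading behaviour of $S_{\gq}$ and $m_\gq$ near $0$ depends only on the common value $\tup$ (the bounded factors $(1-y)^{-\tdown}$ and $e^{-sH(y)}$ do not alter the convergence type), so the classifications automatically coincide. The remaining condition $\tup<1$ is exactly the accessibility condition: when $\tup\ge 1$ the endpoint $0$ is entrance, which the Cri--Urusov framework classifies as separating, matching the statement.

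Part (c) is identical in structure after the substitution $x\leftrightarrow 1-x$, $\tup\leftrightarrow\tdown$. The only step I expect to require care is the reduction to the boundary definition in the appendix, since that definition carries several clauses designed to handle mixed classifications; verifying it in the $\tup_0=\tup_1<1$ case really amounts to checking that the scale and speed asymptotics derived from \eqref{eq:scale}--\eqref{eq:speed2} are identical term by term (up to constants $C_\gq$ that are irrelevant to the convergence test), which is a direct calculation. The rest of the argument is the elementary asymptotic analysis sketched above.
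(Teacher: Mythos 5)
Your part (a) is correct and essentially immediate, though note a subtlety: \dref{def:non-separating} is stated in the paper only for SDEs with inaccessible or absorbing boundaries, whereas the Wright--Fisher boundaries can be reflecting ($\tup\in(0,1)$); the paper therefore proves (a) by verifying the general Definition~\ref{def:good-interior} directly, computing $\dd^+ S_{\gq_0}/\dd S_{\gq_1}$ and $\dd m_{\gq_0}/\dd m_{\gq_1}$ and exhibiting the explicit function $\varrho$ of \eqref{eq:beta} --- an object that is then reused in the boundary analysis.

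The genuine gap is in (b), and hence (c): you never engage with the actual boundary criterion, \dref{def:good-endpoint}, and both directions of your argument rest on surrogate tests that are not equivalent to it. For necessity you apply the interior local-integrability test for $(\gm_{\gq_1}-\gm_{\gq_0})^2/\gs^4$ \emph{at the endpoint}, but $0\notin I^\circ$ and this is the wrong integrand there: the occupation-time heuristic of Appendix~\ref{sec:speed-and-scale} weights by $1/S_\gq'$, and the decisive `half-good' integrand \eqref{eq:half-good} behaves near $0$ like $(\tup_0-\tup_1)^2\,y^{-1}$ --- a delicate logarithmic divergence --- not like your $(\tup_1-\tup_0)^2\,y^{-2}$. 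Indeed your unweighted ratio diverges for every $\tup_1$ when $\tup_0\neq\tup_1$, yet for $\tup_1>1$ the correctly weighted approach integral is finite ($\asymp\int_{0+}y^{\tup_1-2}\dd y<\infty$), and separation there comes instead from $S_{\gq_1}(0)=-\infty$, i.e.\ failure of clause (i)(b) of \dref{def:good-endpoint}; so your test happens to point the right way in this model but is not the quantity that decides the matter. For sufficiency the problem is worse: you replace the definition by ``same Feller classification and same integrability/divergence behaviour of scale and speed near the endpoint,'' but that criterion is also satisfied when $\tup_0\neq\tup_1$ with both in $(0,1)$ (both endpoints regular reflecting, both scales finite, both speed densities integrable), a case in which $0$ \emph{is} separating --- so the criterion cannot establish non-separation, and your step (3) proves nothing; it is only your (invalidly justified) necessity step that excludes the counterexample.

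What the paper actually does, and what your sketch omits, is the computation that makes the ``iff'' work: with $\varrho$ as in \eqref{eq:beta} one has $\varrho^2(y)=C_{\gq_1}^{-2}y^{2\tup_1-2}(1-y)^{2\tdown_1}e^{2s_1H(y)}[(\tup_0-\tup_1)^2+y\phi(y)]$ for bounded $\phi$, and the half-good integral \eqref{eq:half-good} reduces to $\int_{0+}[(\tup_0-\tup_1)^2 y^{-1}+O(1)]\dd y$, finite if and only if $\tup_0=\tup_1$, while $S_{\gq_1}(0)\in\bbR$ if and only if $\tup_1<1$; one must then still check clauses (ii)--(iii) of \dref{def:good-endpoint} (the speed-measure atom dichotomy and the derivative identity \eqref{eq:speed-times-scale}) under $\tup_0=\tup_1<1$ --- straightforward, but part of the proof. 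Note also that when $\tup_0=\tup_1$ but $\tdown_0\neq\tdown_1$ or $s_0\neq s_1$ the function $\varrho$ is not zero near $0$, so the sufficiency direction is a genuine integrability check, not a matter of the two scale/speed asymptotics ``coinciding term by term.'' As written, parts (b) and (c) are not proved.
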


\begin{proof}
See Appendix \ref{sec:separating-points}.
\end{proof}

\begin{remark}
If $\tup_0,\,\tup_1 \geq 1$ then by convention we say that $0$ is a separating point even if $\tup_0 = \tup_1$. The intuition that separation will occur ``by time $T_0$'' is still valid when $\tup_0 = \tup_1 \geq 1$ yet $\gq_0 \neq \gq_1$ because here we know that $T_0 = \infty$ under either measure, and by time $\infty$ the two measures will have separated for other reasons (either by hitting a different separating point or by the speed measure becoming observable over an infinite time horizon).
\end{remark}

\begin{proof}[Proof of \thmref{thm:CU2022}]
By \lemmaref{lem:separating-points}, the set of separating points for the pair $(\bP{\gq_0},\bP{\gq_1})$ is $A = A_0 \cup A_1$, where
\begin{align*}
A_0 &= \begin{cases}
\emptyset & \tup_0 = \tup_1 < 1,\\
\{0\} & \text{otherwise},
\end{cases} &
A_1 &= \begin{cases}
\emptyset & \tdown_0 = \tdown_1 < 1,\\
\{1\} & \text{otherwise}.
\end{cases}
\end{align*}
Hence using Theorem 2.13 of \citet{cri:uru:arxiv:v2}, we can write $S$ in the form
\[
S = \begin{cases}
\gd & \gq_0 = \gq_1,\\
U \wedge V \wedge R & \gq_0 \neq \gq_1,
\end{cases}
\]
($\bP{\gq_0}, \bP{\gq_1}$-a.s.)\ where, after some simplification,
\begin{align*}
U &= \begin{cases}
T_0 & 0 \in A \text{ and either } X_0 = 0 \text{ or }\liminf_{t\uparrow T_0} X_t = 0,\\
\gd & \text{otherwise},
\end{cases}\\
V &= \begin{cases}
T_1 & 1 \in A  \text{ and either } X_0 = 1 \text{ or } \limsup_{t\uparrow T_1} X_t = 1,\\
\gd & \text{otherwise}.
\end{cases}\\
R &= \begin{cases}
\infty & 0 < \tup_0 = \tup_1 < 1\text{ and } 0 < \tdown_0 = \tdown_1 < 1,\\
\gd & \text{otherwise}.
\end{cases}
\end{align*}
The expression for $S$ can be further simplified because we can give the distribution of $T_0$ and $T_1$ in more detail. In particular we appeal to the following properties of the diffusion:
\begin{subequations}
\begin{align}
\bP{(\tup,\tdown,s)}(\overline{T}_0 \wedge \overline{T}_1 = T_0 \wedge T_1 < \infty) = 1\; &\Longleftrightarrow \;\tup\vee\tdown < 1,\label{eq:a}\\
\bP{(\tup,\tdown,s)}(\overline{T}_0 = T_0 < \infty) = 1\; &\Longleftrightarrow \;\tup < 1,\,\tdown > 0,\label{eq:b}\\
\bP{(\tup,\tdown,s)}(\overline{T}_1 = T_1 < \infty) = 1\; &\Longleftrightarrow  \;\tdown < 1,\,\tup > 0,\label{eq:c}\\
\bP{(\tup,\tdown,s)}(T_0 = \infty) = 1\; &\Longleftrightarrow \;\tup \geq 1,\label{eq:d}\\
\bP{(\tup,\tdown,s)}(T_1 = \infty) = 1\; &\Longleftrightarrow \;\tdown \geq 1,\label{eq:e}\\
\bP{(\tup,\tdown,s)}\Big(\liminf_{t\uparrow T_0} X_t = 0\Big) = 1\; &\Longleftrightarrow \;\tdown > 0,\label{eq:f}\\
\bP{(\tup,\tdown,s)}\Big(\limsup_{t\uparrow T_1} X_t = 1\Big) = 1\; &\Longleftrightarrow \;\tup > 0,\label{eq:g}
\end{align}
\end{subequations}
which follow easily from its boundary classifications (and the fact that the diffusion is regular). \tref{tab:S} can then be constructed as follows. First we note that the first five columns encompass all possible combinations of $(\tup_1,\tdown_1,s_1)$ and $(\tup_0,\tdown_0,s_0)$, grouped into different cases each of which we address in turn. In the following list, each equality and inequality of random variables should be understood as holding $\bP{\gq_1}$-a.s.
\begin{enumerate}[{\sc (i)}]
\item Here $\gq_0 = \gq_1$ and we immediately obtain $S=\delta$.
\item Here $A = \emptyset$ so $U=V=\delta$, and one endpoint is absorbing so $R=\delta$. Hence $S=\delta$.\label{item:ii}
\item Similar to \ref{item:ii}.
\item Here $A = \emptyset$ so $U=V=\delta$, and both boundaries are non-separating and reflecting so $R=\infty$. Hence $S=R=\infty$.
\item Here $A=\{0\}$ so $V=\delta$, and $U=T_0$ if $\liminf_{t\uparrow T_0}X_t = 0$ and $U=\delta$ otherwise. This can be written concisely as $U=\overline{T}_0$ since $\liminf_{t\uparrow T_0}X_t = 0$ holds a.s.\ on $\{T_0 < \infty\}$ by sample path continuity and fails a.s.\ on $\{T_0 = \infty\}$, noting that the endpoint at 1 is absorbing and from \eqref{eq:a} must be hit in finite time on $\{T_0=\infty\}$. Neither endpoint is reflecting so $R=\delta$. Hence $S=U=\overline{T}_0$.\label{item:v}
\item Here $A=\{1\}$ so $U = \delta$, and $V=T_1$ if $\limsup_{t\uparrow T_1}X_t = 1$ and $V=\delta$ otherwise. This can be written as $V=\overline{T}_1$ similarly to \ref{item:v}. The endpoint 0 is not reflecting so $R=\delta$. Hence $S=V=\overline{T}_1$.
\item Here $A=\{0,1\}$ so $U=T_0$ if $\liminf_{t\uparrow T_0}X_t = 0$ and $U=\delta$ otherwise, and $V=T_1$ if $\limsup_{t\uparrow T_1}X_t = 1$ and $V=\delta$ otherwise. Property \eqref{eq:a} tells us that either $T_0 <\infty$ or $T_1 < \infty$ and hence either $U = T_0 <\infty$ or $V = T_1 < \infty$. Thus since $R \geq \infty$ we know that $S = U \wedge V = T_0 \wedge T_1$.
\item Here $A=\{0\}$ so $U=T_0$ if $\liminf_{t\uparrow T_0}X_t = 0$ and $U=\delta$ otherwise. From \eqref{eq:f} we know the former condition holds a.s.\ so $U=T_0$, and from \eqref{eq:b} we have $T_0 < \infty$. Since $V=\delta$ and $R \geq \infty$ we obtain $S = U = T_0$.
\item Here $A = \{0,1\}$ so $U=T_0$ if $\liminf_{t\uparrow T_0}X_t = 0$ and $U=\delta$ otherwise, and $V=T_1$ if $\limsup_{t\uparrow T_1}X_t = 1$ and $V=\delta$ otherwise. Property \eqref{eq:f} implies $U = T_0$ and properties \eqref{eq:b} and \eqref{eq:e} imply $T_0 < T_1=\infty$. Hence $V = \infty$ and, with $R \geq \infty$, we get $S = U = T_0$.
\item Here $A=\{1\}$ so $U=\delta$, and $V=T_1$ if $\limsup_{t\uparrow T_1}X_t = 1$ and $V=\delta$ otherwise. But by properties \eqref{eq:b} and \eqref{eq:e} we have $T_0 < T_1 = \infty$. Hence $\limsup_{t\uparrow T_1}X_t < 1$ since 0 is absorbing, so $V=\delta$. Neither endpoint is reflecting so $R=\delta$, and we arrive at $S = \delta$.
\item Here $A=\{1\}$ so $U=\delta$, and $V=T_1$ if $\limsup_{t\uparrow T_1}X_t = 1$ and $V=\delta$ otherwise. By property \eqref{eq:c} we know $T_1 < \infty$ and thus $\limsup_{t\uparrow T_1}X_t = 1$ by sample path continuity, so $V = T_1$. Since $\tdown_0 \neq \tdown_1$ we have $R=\delta$, and hence $S = V = T_1$.
\item Here $A=\{1\}$ so $U=\delta$, and $V=T_1$ if $\limsup_{t\uparrow T_1}X_t = 1$ and $V=\delta$ otherwise. By property \eqref{eq:g} we have $\limsup_{t\uparrow T_1}X_t = 1$, so $V = T_1$, and by property \eqref{eq:e} we have $T_1 = \infty$. The endpoint at 1 is not reflecting so $R=\delta$. Hence $S=V=T_1=\infty$.
\item Here $A=\{0,1\}$ so $U=T_0$ if $\liminf_{t\uparrow T_0}X_t = 0$ and $U=\delta$ otherwise, and $V=T_1$ if $\limsup_{t\uparrow T_1}X_t = 1$ and $V=\delta$ otherwise. By properties \eqref{eq:f} and \eqref{eq:g} we know that $\liminf_{t\uparrow T_0}X_t = 0$ and $\limsup_{t\uparrow T_1}X_t = 1$, so $U=T_0$ and $V=T_1$. By properties \eqref{eq:d} and \eqref{eq:e} we further have that $T_0 = T_1 = \infty$. Thus, noting that $R=\delta$, we have $S = U \wedge V = T_0 \wedge T_1 = \infty$.
\item Here $A=\{0\}$ so $V=\delta$, and $U=T_0$ if $\liminf_{t\uparrow T_0}X_t = 0$ and $U=\delta$ otherwise. By property \eqref{eq:f} we have $\liminf_{t\uparrow T_0}X_t = 0$ and so $U=T_0$. Further, $T_0 = \infty$ by property \eqref{eq:d}. With $R=\delta$ we obtain that $S = U = T_0 = \infty$.
\end{enumerate}

That $Z_T$ is a $\bP{\gq_0}$-martingale follows from Theorem 2.7 of \citet{des:etal:2021}. Their argument is not given in terms of separating times, but they show that effectively the same calculations in our context suffice for $Z_T$ to be a martingale. More precisely, their function $\varphi$ is
\[
\varphi(x) = \left[\left(\frac{x}{x_0}\right)^{\tup_1-\tup_0}\left(\frac{1-x}{1-x_0}\right)^{\tdown_1-\tdown_0}\exp{[(s_1-s_0)(H(x)-H(x_0))]}\right]^{\frac{1}{2}},
\]
and conditions 1--3 of their theorem translate, in this setting, to: (i) Either $\tup_1 < 1$ or $\tdown_1 < 1$; (ii) $\bP{\gq_1}(T_0 \wedge T_1 < \infty) = 1$ (which follows from (i)); (iii) Either \eqref{eq:half-good} or \eqref{eq:good1}--\eqref{eq:good2} fails. If (i--iii) does not hold, which is easily verified to correspond to the cases for which $\bP{\gq_1}(S \geq \infty) = 1$, then $Z_T$ is a martingale under $\bP{\gq_0}$.
\end{proof}

\begin{remark}
One could extend the arguments of \citet{des:etal:2021} to show that under those conditions for which $\bP{\gq_1}(S = \gd) = 1$ then $Z_T$ is a u.i.\ $\bP{\gq_0}$-martingale and hence $\bP{\gq_1} \ll \bP{\gq_0}$. Equivalence between $Z_T$'s being a $\bP{\gq_0}$-martingale (resp.\ u.i.\ $\bP{\gq_0}$-martingale) and $\bP{\gq_1}(S \geq \infty) = 1$ (resp.\ $\bP{\gq_1}(S = \gd) = 1$) is shown explicitly by \citet{mij:uru:2012:PTRF} under the Engelbert--Schmidt conditions. 
\end{remark}

\section{Fleming--Viot measures} \label{sec:FV}
As much of the preceding work relies heavily on our knowledge about the scale function and speed measure, it does not generalize readily to multidimensional diffusions. There is a natural multidimensional analogue of \eqref{eq:WFSDE} to describe the joint frequency of $K$, rather than two, alleles. By casting the process as a measure-valued diffusion, we can even jump straight to an infinite-dimensional analogue whose allele space can be an arbitrary (compact) Polish space $(E,\sE)$. This is the Fleming--Viot process $(\gm^\nu_t:\: t\geq 0)$ which takes values in $\cP(E)$, the set of Borel probability measures on $E$ with the topology of weak convergence. We will consider the class of neutral Fleming--Viot processes whose generator is of the form
\begin{align}\label{eq:FV1}
(\cL \varphi)(\gm) = {}& \frac{1}{2}\int_E\int_E [\gd_x (\dd y) - \gm(\dd y)] \frac{\gd^2 \varphi(\gm)}{\gd \gm(x)\gd\gm(y)}\mu(\dd x) + \int_E \cA\left(\frac{\gd\varphi(\gm)}{\gd \gm(\cdot)}\right)(x)\gm(\dd x) 
\end{align}
where $\gd\varphi(\gm)/\gd\gm(x) = \lim_{\ge\to0+}[\varphi(\gm + \ge\gd_x - \varphi(\gm)]/\ge$, $\gd_x$ denotes a unit atom at $x$, and
\begin{equation}\label{eq:FV2}
(\cA f)(x) = \frac{1}{2}\int_E [f(y) - f(x)]\nu(\dd y)
\end{equation}
is the generator for a Markov process describing the effects of (parent-independent) mutation, with $\nu$ a finite measure on $E$. The domain for $\cL$ is the set of bounded functions on $\cP(E)$ of the form
\[
\varphi(\gm) = F(\langle f_1,\gm\rangle,\dots, \langle f_k,\gm\rangle), \qquad \langle f,\gm\rangle := \int_E f\dd \gm,
\]
where $k \geq 1$, $f_1,\dots,f_k$ are in the domain $C(E)$ of $\cA$, and $F \in C^2(\bbR^k)$. See \citet{eth:kur:1993} for further details.

Although we cannot give a complete answer to the mutual arrangement of two Fleming--Viot path measures, a convenient projective property of the family given above allows us to detect certain times by which separation must have occurred. We denote the path measure associated with \eqref{eq:FV1}--\eqref{eq:FV2}, parametrised by $\nu$, by $\bbF\bbV_\nu$.

\begin{theorem}\label{thm:FV}
Let $\sS$ be the separation time associated with $(\bbF\bbV_{\nu_1}, \bbF\bbV_{\nu_0})$. Then for any $B \in \sB(E)$:
\[
\sS \leq S_B,
\]
($\bbF\bbV_{\nu_1}$, $\bbF\bbV_{\nu_0}$-a.s.)\ where $S_B$ is the separation time associated with $(\gm^{\nu_1}_t(B), \gm^{\nu_0}_t(B))$. Furthermore, $S_B$ is given by \tref{tab:S} when we identify $\tup_1 = \nu_1(B)$, $\tdown_1 = \nu_1(E\setminus B)$, $\tup_0 = \nu_0(B)$, $\tdown_0= \nu_0(E\setminus B)$, and $s_1 = 0 = s_0$.
\end{theorem}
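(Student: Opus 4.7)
The plan is to reduce the claim to the one-dimensional result \thmref{thm:CU2022} via a projective property of Fleming--Viot processes, combined with the observation that mutual singularity lifts from a sub-filtration to the full filtration.

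\textbf{Step 1: Projection is Wright--Fisher.} The key fact I would establish first is that, under $\bbF\bbV_\nu$, the real-valued process $Y_t := \mu^\nu_t(B)$ is a Wright--Fisher diffusion of the form \eqref{eq:WFSDE} with parameters $\tup = \nu(B)$, $\tdown = \nu(E\setminus B)$, and $s = 0$. To see this, apply $\cL$ in \eqref{eq:FV1}--\eqref{eq:FV2} to test functions $\varphi(\mu) = F(\langle f,\mu\rangle)$ with $F \in C^2(\bbR)$ and $f\in C(E)$. A short computation using $\delta\varphi(\mu)/\delta\mu(x) = F'(\langle f,\mu\rangle)f(x)$ gives
\[
(\cL\varphi)(\mu) = \tfrac{1}{2}[\langle f^2,\mu\rangle - \langle f,\mu\rangle^2]F''(\langle f,\mu\rangle) + \tfrac{1}{2}[\langle f,\nu\rangle - \langle f,\mu\rangle\nu(E)]F'(\langle f,\mu\rangle).
\]
Specialising (via a bounded pointwise approximation of $\mathbf{1}_B$ by continuous functions) to $f = \mathbf{1}_B$, the right-hand side becomes $\tfrac{1}{2}\mu(B)(1-\mu(B))F''(\mu(B)) + \tfrac{1}{2}[\nu(B)(1-\mu(B)) - \nu(E\setminus B)\mu(B)]F'(\mu(B))$, which is exactly the one-dimensional WF generator of \eqref{eq:WFSDE} (with $s=0$) acting on $F$ at $\mu(B)$. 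By uniqueness of the Wright--Fisher martingale problem, $Y$ has the claimed law.

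\textbf{Step 2: Identification of $S_B$.} Given the projective property, the form of $S_B$ is read off immediately from \thmref{thm:CU2022} applied to the one-dimensional pair of WF laws with $(\tup_i,\tdown_i,s_i) = (\nu_i(B), \nu_i(E\setminus B), 0)$; all three selection parameters coincide (at $0$) so only the mutation-parameter cases of \tref{tab:S} are relevant.

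\textbf{Step 3: Lifting singularity.} Let $\cG_t := \sigma(\mu_s(B): s\leq t)$, a sub-filtration of $\cF_t$. By Step~1, the pushforward of $\bbF\bbV_{\nu_i}$ under the projection $\pi_B: \omega \mapsto (\omega_t(B))_{t\geq 0}$ is the corresponding WF law, and $S_B$ (composed with $\pi_B$) is a $(\cG_t)$-stopping time, hence also an $(\cF_t)$-stopping time. For any $(\cF_t)$-stopping time $\tau$, on $\{\tau \geq S_B\}$ the $\cG_\tau$-restrictions of $\bbF\bbV_{\nu_1}$ and $\bbF\bbV_{\nu_0}$ are mutually singular by definition of $S_B$; since any $\cG_\tau$-measurable witness of singularity is automatically $\cF_\tau$-measurable, the $\cF_\tau$-restrictions are also mutually singular on $\{\tau\geq S_B\}$. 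By the defining (maximality) property of the separating time $\sS$ for $(\bbF\bbV_{\nu_1},\bbF\bbV_{\nu_0})$, this forces $\sS \leq S_B$ a.s.

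\textbf{Main obstacle.} The only genuinely delicate step is the approximation $f_n \to \mathbf{1}_B$ in Step~1, since $\mathbf{1}_B$ is not in the domain $C(E)$ of $\cA$. The cleanest remedy is to invoke the standard result (e.g.\ from \citet{eth:kur:1993}) that for any measurable partition $\{B, E\setminus B\}$, the projected process $(\mu^\nu_t(B), \mu^\nu_t(E\setminus B))$ solves the two-dimensional Wright--Fisher martingale problem with mutation rates $(\nu(B),\nu(E\setminus B))$; the remainder of the argument is then essentially bookkeeping.
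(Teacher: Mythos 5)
Your proposal is correct and takes essentially the same route as the paper: the paper likewise identifies $S_B$ via the Ethier--Kurtz projective property (the very citation you fall back on after your generator computation, which serves only as an expanded justification of it) and then applies \thmref{thm:CU2022}, while your Step~3 merely spells out what the paper dismisses as ``the obvious fact'' that $(\bbF\bbV_{\nu_1},\bbF\bbV_{\nu_0})$ must separate whenever one of its projections does. The extra detail you supply (the sub-filtration $\cG_t$, lifting a $\cG_\tau$-measurable witness of singularity to $\cF_\tau$, and invoking minimality of $\sS$) is a sound elaboration of that one-line argument, not a different proof.
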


\begin{proof}
The first claim follows from the obvious fact that $(\bbF\bbV_{\nu_1}, \bbF\bbV_{\nu_0})$ must separate if one of its projections does. The second claim follows from the fact that $(\mu^{\nu}_t(B) :\: t\geq 0)$ is a Wright--Fisher diffusion of the form \eqref{eq:WFSDE} with parameters $\gq = (\nu(B), \nu(E\setminus B), 0)$ \citep{eth:kur:1993}. 
\end{proof}

To use \thmref{thm:FV} to try to detect separation from a realization $\omega$, one should therefore look for a region $B\subseteq E$ of the type space for which $\nu_1(B) \neq \nu_0(B)$ and $\mu^{\nu}_t(B)(\omega) = 0$ for some $t$.

A non-equivalence statement for the Dawson--Watanabe superprocess is given by \citet[Proposition 5.2]{daw:1978}. That result also uses a convenient projective property, namely that the total mass of a Dawson--Watanabe process is a continuous-branching-with-immigration (CBI) diffusion. Other projection arguments, allowing statements about the mutual arrangement of multidimensional processes to be deduced from their projections, are given by \citet{cri:gla:2018} and \citet{cri:2021} for processes of a certain `partially radial' type.

\section{A closer look at boundary behaviour} \label{sec:bessel}
If $0$ is of entrance type then it is not inside the state space and therefore $x_0 = 0$ is not covered by \thmref{thm:CU2022} (similarly for the endpoint at $1$). It is, however, valid to initiate a diffusion from an entrance boundary; the sample path instantaneously enters $I$ and never returns to that endpoint. This idea is relatively well studied for the Bessel process \citep{pit:yor:1982}, though less so for the Wright--Fisher diffusion \citep[but see][]{sch:etal:2016, gri:etal:2018}.  A complete classification of the mutual relationship between two Bessel process path measures, allowing for the possibility that $x_0 = 0$ and either law makes $0$ an entrance boundary, is given by \citet{che:uru:2006}. Let $(\rho_t:\: t\geq 0)$ denote a Bessel process with $\rho_0 = 0$ and its corresponding path measure restricted to $\cF_T$ by $\bbB^T_\gk$, where $\gk \in [0,\infty)$ is its `dimension' parameter. Corollary 4.1 of \citet{che:uru:2006} shows that $\bbB^0_{\gk_0} \perp \bbB^0_{\gk_1}$ if $\gk_0 \neq \gk_1$.

The goal of this section is to conduct a similar study for the Wright--Fisher diffusion. Compared to the Bessel process, this is a much less straightforward endeavour because the arguments used to compare two path measures $(\bbB_{\gk_0},\bbB_{\gk_1})$ on $\cF_{0}$ rely heavily on the time-reversal property $\left(t\rho_{1/t}:\: t\geq 0\right) \overset{d}{=} (\rho_t :\: t \geq 0)$ \citep{shi:wat:1973}, and thus the behaviour of $\rho_t$ near $t=0$ can be obtained by studying its limiting behaviour as $t\to\infty$. This indispensable identity is, unfortunately, not available to us here.

\subsection{Reflecting endpoint} \label{sec:reflecting}
Although a reflecting endpoint is covered by \thmref{thm:CU2022}, we gain additional qualitative insight by a more direct argument, specifically by using the scale function and speed measure to reduce the process to a Brownian motion. This approach will yield a new zero-one law for additive functionals constructed from negative-degree monomials.

We first recall the transformation taking a diffusion $X$ satisfying \eqref{eq:SDE} to a Brownian motion \citep[see for example][Thm.\ V.47.1, p277]{rog:wil:2000:II}: there exists a Brownian motion $W$ with $W_0 = S_\gq(x_0)$ such that $X$ on its natural scale can be expressed via a time change of $W$:
\begin{equation}\label{eq:speed-and-scale}
X_t = S_\gq^{-1}(W_{\gamma(t)}),
\end{equation}
where $S_\gq:I\to \bbR$ is the scale function associated with $X$ and
\[
\gamma(t) := \inf\{s \in[0,\infty):\: L_s > t\}
\]
is the right-continuous inverse to the additive functional
\[
L_t := \int \ell^{y}_t(W) m_Y(\dd y) = \int_0^t m_Y(W_s) \dd s;
\]
here $\{\ell^y_t(W): y\in\bbR, t \in [0,\infty)\}$ is the semimartingale local time process of $W$, and $m_Y(\dd y) = m_Y(y)\dd y$ is the speed measure of $Y_t = S_\gq(X_t)$. The latter process satisfies 
\begin{equation}
\label{eq:Y}
Y_t = S_{\gq}(x_0) + \int_0^t \gs_Y(Y_s) \dd W_s,
\end{equation}
and since $m_Y(\dd y) = \gs^{-2}_Y(y) \dd y$ (\citet[Thm.\ 16.83, p387]{bre:1968}, \citet[Ch.\ V.47.27, p283]{rog:wil:2000:II}), $\gamma(t)$ is given by the solution to
\begin{equation}
\label{eq:gamma}
\int_0^{\gamma(t)} \frac{1}{\gs^2_Y(W_s)} \dd s = t.
\end{equation}
By It\^o's lemma, $Y$ has diffusion coefficient
\begin{align}
\gs_Y(Y_t) &= (S_{\gq}'\gs)\circ S_{\gq}^{-1}(Y_t) = \gs(X_t)S_\gq'(X_t). \label{eq:sigmaY}
\end{align}
(For a diffusion with a reflecting endpoint it is perhaps more natural to express $X$ in \eqref{eq:speed-and-scale} in terms of a \emph{reflecting} Brownian motion, though this is not necessary since the latter is itself a time change of $W$ \citep[p283]{rog:wil:2000:II}.)

Now let us return to the Wright--Fisher diffusion, and suppose $\tup \in (0,1)$ in order to make the endpoint at 0 reflecting under $\bP{\gq}$. The process $Y_t = S_\gq(X_t)$ with scale function \eqref{eq:scale} satisfies \eqref{eq:Y} with
\begin{align}
\label{eq:sigmaY-WF}
\gs_Y(Y_t) &= C_\gq\sqrt{X_t(1-X_t)}X_t^{-\tup}(1-X_t)^{-\tdown}e^{-sH(x)}, & X_t &= S^{-1}_\gq(Y_t).
\end{align}

We study behaviour of these quantities in the vicinity of the 0-endpoint (see \citet[Lemma 4.4]{bar:etal:2004}, with corrections by \citet{tay:2007}, for a related calculation in a different context). Choosing $d=0$ in \eqref{eq:scale} and letting $\cC$ denote a positive constant whose value may change line by line, we note in particular that:
\[
S_{\gq}(x) \sim C_\gq\int_0^x y^{-\tup} \dd y = \cC x^{1-\tup}, \qquad x\downarrow 0,
\]
so, since $S_\gq \circ S_\gq^{-1} = \text{Id}$,
\begin{equation}
\label{eq:scale-inverse}
S^{-1}_\gq(y) \sim \cC y^{\frac{1}{1-\tup}},
\end{equation}
and, from \eqref{eq:sigmaY-WF},
\begin{equation}
\label{eq:sigmaY-WF-asymptotic}
\gs_Y(y) \sim C_\gq x^{\frac{1}{2} - \tup} = C_\gq [S^{-1}_{\gq}(y)]^{\frac{1}{2} - \tup} = \cC y^{\frac{\frac{1}{2} - \tup}{1-\tup}}.
\end{equation}
Thus the time change \eqref{eq:gamma} becomes
\[
\cC\int_0^{\gamma(t)} \frac{1}{|W_s|^\frac{1-2\tup}{1-\tup}} \dd s = t.
\]
It is known that
\begin{equation}\label{eq:0-1BM}
\int_0^u \frac{1}{|W_s|^{k}} \dd s \phantom{=}\begin{cases}
< \infty, & k < 1,\\
= \infty, & k \geq 1,
\end{cases}
\end{equation}
holds a.s.~for $u\in(0,\infty)$ \citep[see for example][Ch.\ 6, Problem 1, p332]{eth:kur:1986}, so applying this result to $k=\frac{1-2\tup}{1-\tup} < 1$ we deduce that $t \in (0,\infty)$ implies $\gamma(t) \in (0,\infty)$. Moreover, $\gamma(\cdot)$ is with probability 1 a continuous, increasing function with $\gamma(0) = 0$ and $\lim_{t\to\infty}\gamma(t) = \infty$.

Now consider, for a constant $\gk \in [0,\infty)$ and a time $u\in (0,\infty)$, the integral
\begin{align}
\int_0^{\gamma^{-1}(u)} \frac{1}{|X_s|^\gk} \dd s &= \int_0^{u} \frac{1}{|S_{\gq}^{-1}(Y_s)|^\gk} \frac{\dd v}{\ggg'_s}, & v = \ggg_s,\notag\\
&= \int_0^{u} \frac{1}{|S_{\gq}^{-1}(W_v)|^\gk} \frac{\dd v}{\gs^2_Y(W_v)} & \text{by \eqref{eq:gamma},}\notag\\
&\sim \cC\int_0^{u} |W_v|^{-\gk\left(\frac{1}{1-\tup}\right)} |W_v|^{-2\left(\frac{\frac{1}{2} - \tup}{1-\tup}\right)} \dd v & \text{by \eqref{eq:scale-inverse} and \eqref{eq:sigmaY-WF-asymptotic},}\notag\\
&= \cC\int_0^{u} |W_v|^{-\left(\frac{\gk+1-2\tup}{1-\tup}\right)}\dd v, \label{eq:separate0}
\end{align}
which from \eqref{eq:0-1BM} implies
\[
\int_0^{\gamma^{-1}(u)} \frac{1}{|X_s|^\gk} \dd s \phantom{=}\begin{cases}
< \infty, & 0 \leq \gk < \tup,\\
= \infty, & \gk \geq \tup,
\end{cases}
\]
noting that neither the value of $\cC$ nor the asymptotic approximations taken above will affect the finiteness or otherwise of the left-hand side of \eqref{eq:separate0}.

Finally, we would like to argue that we can replace $\gamma^{-1}(u)$ with a deterministic time $t$. This follows since if $\gk \geq \tup$ then 
\begin{align*}
\bP{\gq}\left(\int_0^{t} \frac{1}{|X_s|^\gk} \dd s=\infty\right) &= \int_0^\infty \bP{\gq}\left(\int_0^{t} \frac{1}{|X_s|^\gk} \dd s=\infty,\, \gamma(t) \in \dd u\right)\\
&= \int_0^\infty \bP{\gq}\left(\int_0^{\gamma^{-1}(u)} \frac{1}{|X_s|^\gk} \dd s=\infty,\, \gamma(t) \in \dd u\right)\\
&= \int_0^\infty \bP{\gq}\left(\gamma(t) \in \dd u\right) = 1,
\end{align*}
with a similar argument for $\gk < \tup$. In summary, we have shown the following.
\begin{theorem} \label{prop:reflecting}
Let $X$ be a Wright--Fisher diffusion satisfying \eqref{eq:WFSDE} with $x_0 = 0$ and $\tup \in (0,1)$, and fix $\gk \geq 0$. Then, for each $t> 0$, $\bP{\gq}$-a.s.:
\begin{align}
\label{eq:separate}
\int_0^t \frac{1}{|X_s|^\gk} \dd s \phantom{=}\begin{cases}
< \infty, & 0 \leq \gk < \tup,\\
= \infty, & \gk \geq \tup.
\end{cases}
\end{align}
\end{theorem}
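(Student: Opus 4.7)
The plan is to reduce the dichotomy to a classical zero-one law for integrals of negative powers of Brownian motion by passing to the natural scale of $X$. I would use the scale-and-speed representation $X_t = S_\gq^{-1}(W_{\ggg(t)})$ from \eqref{eq:speed-and-scale}--\eqref{eq:gamma}, where $W$ is a standard Brownian motion started at $S_\gq(0)=0$ (taking $d=0$ in \eqref{eq:scale}), $\gs_Y$ is the diffusion coefficient in \eqref{eq:sigmaY-WF}, and $\ggg$ is the inverse of $s\mapsto\int_0^s\gs_Y^{-2}(W_v)\dd v$.

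The first step is to extract the leading-order behaviour near $0$ of $S_\gq^{-1}$ and $\gs_Y$. Because the factor $(1-x)^{-\tdown}e^{-sH(x)}$ has a finite positive limit as $x\downarrow 0$, a direct integration of \eqref{eq:scale} gives $S_\gq(x)\sim\cC x^{1-\tup}$, hence $S_\gq^{-1}(y)\sim\cC y^{1/(1-\tup)}$ and $\gs_Y(y)\sim\cC y^{(1/2-\tup)/(1-\tup)}$. The assumption $\tup\in(0,1)$ makes the exponent $(1-2\tup)/(1-\tup)$ appearing in the defining equation \eqref{eq:gamma} strictly less than $1$, so the Brownian dichotomy \eqref{eq:0-1BM} ensures that $\ggg$ is almost surely a continuous, strictly increasing bijection of $[0,\infty)$.

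Second, I would change variables $v=\ggg(s)$ in $\int_0^{\ggg^{-1}(u)} X_s^{-\gk}\dd s$, using $\dd s=\gs_Y^{-2}(W_v)\dd v$, and substitute the two asymptotics above to reduce the integrand to a constant multiple of $|W_v|^{-(\gk+1-2\tup)/(1-\tup)}$. A second application of \eqref{eq:0-1BM} then produces the advertised dichotomy, finiteness being equivalent to $(\gk+1-2\tup)/(1-\tup)<1$, i.e.\ to $\gk<\tup$. Finally I would transfer the result from the random time $\ggg^{-1}(u)$ to a deterministic time $t$ by conditioning on $\ggg(t)\in\dd u$ and integrating, which is legitimate because $\ggg(t)\in(0,\infty)$ $\bP{\gq}$-a.s.

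The main technical point needing care is that the asymptotic equivalences near $0$ really drive the finiteness of the transformed integral. This reduces to checking that the prefactors $S_\gq^{-1}(y)/y^{1/(1-\tup)}$ and $\gs_Y(y)/y^{(1/2-\tup)/(1-\tup)}$ are bounded above and below by positive constants on a neighbourhood of $0$, which follows from continuity and positivity of their limits at $0$, and that the contribution of the Brownian integrand away from $0$ is a.s.\ finite because the integrand is locally bounded there. Everything else is bookkeeping in the exponents.
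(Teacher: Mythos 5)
Your proposal is correct and follows essentially the same route as the paper's own proof: the scale-and-speed reduction via \eqref{eq:speed-and-scale}--\eqref{eq:gamma} with $d=0$, the asymptotics \eqref{eq:scale-inverse} and \eqref{eq:sigmaY-WF-asymptotic}, the change of variables leading to $\cC\int_0^u |W_v|^{-(\gk+1-2\tup)/(1-\tup)}\dd v$ and the zero-one law \eqref{eq:0-1BM}, and the final conditioning on $\ggg(t)\in\dd u$ to replace the random time $\ggg^{-1}(u)$ by a deterministic $t$. Your closing paragraph merely makes explicit the paper's brief remark that the constants and asymptotic approximations do not affect finiteness of the integral.
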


\begin{corollary}\label{cor:reflecting}
Let $X$ be as in \thmref{prop:reflecting}, and suppose $\tup_1 \in [0,1)$ with $\tup_1\neq \tup$. Then $\bPT{\gq}{0} \perp \bPT{\gq_1}{0}$.
\end{corollary}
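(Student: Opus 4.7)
The natural plan is to leverage the zero-one dichotomy of \thmref{prop:reflecting}: the additive functional $I_\gk(t):=\int_0^t|X_s|^{-\gk}\dd s$ is $\bP{\gq}$-a.s.\ finite when $\gk<\tup$ and $\bP{\gq}$-a.s.\ infinite when $\gk\ge\tup$. Since $\tup_1\neq\tup$ one can pick an exponent $\gk$ that sits on opposite sides of the dichotomy for the two laws, giving a pathwise statistic that discriminates $\bP{\gq}$ from $\bP{\gq_1}$. The only real work is to arrange this statistic to live in $\cF_0$.

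I would dispose of the degenerate case $\tup_1=0$ first: there $0$ is of exit type under $\bP{\gq_1}$ and the path started at $x_0=0$ is absorbed, so $\{X_s=0\text{ for all }s\in[0,\ge]\}$ belongs to $\cF_0$, has $\bP{\gq_1}$-measure $1$, and has $\bP{\gq}$-measure $0$ by reflection. In the main case $\tup_1\in(0,1)\setminus\{\tup\}$, \thmref{prop:reflecting} is available under both laws. Assuming (by symmetry of $\perp$, and swapping the roles of $\gq$ and $\gq_1$ otherwise) that $\tup_1<\tup$, I would fix any $\gk\in[\tup_1,\tup)$ and conclude that for every $t>0$,
\[
\bP{\gq}(I_\gk(t)<\infty)=1,\qquad \bP{\gq_1}(I_\gk(t)=\infty)=1.
\]

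The main technical step is then to upgrade this separation from $\cF_t$, $t>0$, to $\cF_0$, and this is where I expect the only real obstacle. For this I would use right-continuity of the filtration: the event $B_n:=\{I_\gk(1/n)=\infty\}$ lies in $\gs(X_s:s\in[0,1/n])$, and because $t\mapsto I_\gk(t)$ is non-decreasing the sequence $(B_n)$ is non-increasing in $n$. Setting $B:=\bigcap_{n\ge 1}B_n$ yields an event in $\bigcap_{n\ge 1}\gs(X_s:s\in[0,1/n])=\cF_0$, with $\bP{\gq_1}(B)=1$ (countable intersection of full-measure events) and $\bP{\gq}(B)\le\bP{\gq}(B_1)=0$, witnessing $\bPT{\gq}{0}\perp\bPT{\gq_1}{0}$. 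Monotonicity of $I_\gk$ is what lets the countable intersection over $n$ stand in for the uncountable intersection over $\ge>0$; beyond that, \thmref{prop:reflecting} does all the heavy lifting.
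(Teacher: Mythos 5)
Your proposal is correct and follows essentially the same route as the paper: dispose of $\tup_1=0$ via the absorption event, then for $\tup_1\in(0,1)$ choose $\gk$ strictly between the two mutation parameters and use the dichotomy of \thmref{prop:reflecting} to produce an event of full measure under one law and null measure under the other. Your nested-events argument with $B=\bigcap_n\{I_\gk(1/n)=\infty\}$ merely makes explicit the $\cF_0$-measurability step that the paper states in one line (``the events are $\cF_0$-measurable on the right-continuous filtration''), and is a welcome precision rather than a different method.
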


\begin{proof}[Proof of corollary]
First suppose $\tup_1 > 0$ and then without loss of generality that $\tup < \tup_1$. Choose $\gk$ such that $\tup < \gk < \tup_1$. By \thmref{prop:reflecting}:
\begin{align}\label{eq:0-1}
\bP{\gq}\left(\int_0^t \frac{1}{|X_s|^\gk} \dd s = \infty\right) &= 1, & \bP{\gq_1}\left(\int_0^t \frac{1}{|X_s|^\gk} \dd s = \infty\right) &= 0.
\end{align}
Since these results hold for each $t\in(0,\infty)$, the events in \eqref{eq:0-1} are $\cF_0$-measurable on the right-continuous filtration, so $\bPT{\gq}{0} \perp \bPT{\gq_1}{0}$.

Finally suppose $\tup_1 = 0$. Then clearly
\begin{align*}
\bP{\gq}\left(X_t = 0 \; \forall u\in [0,t]\right) &= 0, & \bP{\gq_1}\left(X_t = 0 \; \forall u\in [0,t]\right) &= 1,
\end{align*}
for each $t\in(0,\infty)$, so again $\bPT{\gq}{0} \perp \bPT{\gq_1}{0}$.
\end{proof}

\subsection{Entrance boundary}
To summarise the previous subsection, our strategy in showing mutual singularity was to use a transformation in space and time to express $X$ in terms of a Brownian motion and then to invoke a zero-one law for certain additive functionals of Brownian motion. The strategy fails if $0$ is of entrance type, $\tup \geq 1$, since then $S_{\gq}(0) = -\infty$ and we would be forced to take $W_0 = -\infty$. We must adapt the argument in a different direction: the key idea is for the Brownian motion to be replaced with a \emph{squared Bessel process} \citep[Ch.\ XI]{rev:yor:1999}. Indeed one might suspect that this is a natural candidate since either a simple spatial transformation $Y_t = 4X_t$ or a time transformation $Y_t = X_{4t}$ yields a new diffusion $Y$ with drift and diffusion coefficients satisfying 
\begin{align}
\label{eq:Bessel-asymptotic}
\mu_Y(y) &\sim 2\tup, &
\gs_Y(y) &\sim 2\sqrt{y},
\end{align}
as $y\downarrow 0$, asymptotically equivalent to a squared Bessel process, which solves
\begin{align*}
\dd Z_t &= \gk\dd t + 2\sqrt{Z_t} \dd W_t, & Z_0 &= z_0 \in [0,\infty).
\end{align*}
Here $\gk \in [0,\infty)$ is the `dimension' parameter. Recall that we denoted the path measure for a Bessel process $(\rho_t:\: t\geq 0)$ initiated from $\rho_0 = 0$ by $\bbB_\gk$; under this measure we have $(Z_t :\: t\geq 0) \overset{d}{=} (\rho^2_t :\: t\geq 0)$.

So at the 0 endpoint, we should expect the relationship between two Wright--Fisher diffusion measures to be analogous to a pair of squared Bessel process measures, with $2\tup$ playing the role of $\gk$. Indeed by identifying the parameters in this way the boundary classifications for the Wright--Fisher diffusion quoted in the introduction agree with those of the squared Bessel process:
\begin{itemize}
\item If $\gk = 0$ then the endpoint at 0 is exit,
\item if $0 < \gk < 2$ then the endpoint at 0 is regular and instantaneously reflecting, and
\item if $\gk \geq 2$ then the endpoint at 0 is entrance.
\end{itemize}

Following the reasoning of \sref{sec:reflecting}, if $1 \leq \tup_0 < \tup_1$ then we should choose a squared Bessel process with parameter $\gk$ such that $2\tup_0 < \gk < 2\tup_1$ and then hope to find an $\cF_0$-measurable `bisecting' event, what \citet{daw:1968} calls a \emph{germ field singularity}, analogous to \eqref{eq:0-1BM} but for a squared Bessel process. Fortunately, such an event can be constructed from the following.

\begin{proposition}[\citet{che:2001}, p208]
\label{prop:cherny}
If $(\rho^2_t:\: t\geq 0)$ is a squared Bessel process then, $\bbB_\gk$-a.s.\ for $\gk \geq 2$ and for any $n\in\bbN$,
\begin{equation}
\label{eq:Bessel-separate}
\frac{1}{\log t}\int_{1/t}^{1/n} \frac{1}{\rho_s^{2}} \dd s \overset{t\to\infty}{\longrightarrow} \frac{1}{\gk - 2}.
\end{equation}
\end{proposition}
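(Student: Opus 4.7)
The plan is to convert the small-time integral into a large-time one via the time-inversion symmetry of the Bessel process, and then to analyze it with It\^o's formula applied to $\log\rho$. For a Bessel process $(\rho_s)_{s\geq 0}$ of dimension $\gk\geq 2$ with $\rho_0 = 0$, the process $\tilde\rho_u := u\rho_{1/u}$ (extended continuously by $\tilde\rho_0 := 0$) is again a Bessel process of dimension $\gk$ starting from $0$, the classical analogue of Brownian time-inversion. Substituting $u = 1/s$ yields
\[
\int_{1/t}^{1/n}\frac{\dd s}{\rho_s^2} \;=\; \int_n^t \frac{\dd u}{u^2 \rho_{1/u}^2} \;=\; \int_n^t \frac{\dd u}{\tilde\rho_u^2} \;=:\; A(t),
\]
so it suffices to prove $A(t)/\log t \to 1/(\gk-2)$ a.s.\ as $t\to\infty$.

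For $\gk>2$, since $\tilde\rho_u > 0$ for every $u > 0$, It\^o's formula applied to $\log\tilde\rho_u$ on $[n,t]$ gives
\[
\log\tilde\rho_t - \log\tilde\rho_n \;=\; \frac{\gk-2}{2}A(t) + M_t, \qquad M_t \;:=\; \int_n^t \frac{\dd B_u}{\tilde\rho_u},
\]
where $B$ is the Brownian motion driving $\tilde\rho$. Rearranging and dividing by $\log t$ reduces the claim to showing, $\bbB_\gk$-a.s., that $\log\tilde\rho_t/\log t \to 1/2$ while $M_t/\log t \to 0$ (the remaining term $\log\tilde\rho_n/\log t$ trivially vanishes).

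For the first limit I would combine the Bessel law of the iterated logarithm $\tilde\rho_t \leq C\sqrt{t\log\log t}$ eventually with a matching polynomial lower bound $\tilde\rho_t \geq t^{1/2-\ge}$ eventually, for each $\ge>0$. The lower bound follows from Bessel scaling $\tilde\rho_t^2 \stackrel{d}{=} t\cdot \chi^2_\gk$ together with a Borel--Cantelli argument using the small-ball estimate $\bbB_\gk(\tilde\rho_1 \leq \gd) \asymp \gd^\gk$ as $\gd\downarrow 0$. For the martingale, Dambis--Dubins--Schwarz represents $M_t = \tilde W_{A(t)}$ for some Brownian motion $\tilde W$ with $\langle M\rangle_t = A(t)$; the lower bound $\tilde\rho_t \geq t^{1/2-\ge}$ yields the a priori estimate $A(t) = O(\log t)$, whence the Brownian LIL gives $|M_t| = O\bigl(\sqrt{\log t\,\log\log\log t}\bigr) = o(\log t)$. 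The degenerate case $\gk = 2$ is handled separately: the It\^o identity reduces to $M_t = \log\tilde\rho_t - \log\tilde\rho_n$, and since $|\log\tilde\rho_t|$ is a.s.\ unbounded in $t$ while DDS forces $|M_t| \leq C\sqrt{A(t)\log\log A(t)}$, one concludes $A(t)/\log t \to \infty$, matching the convention $1/(\gk-2) = +\infty$.

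The principal obstacle is the two-sided logarithmic control of $\tilde\rho_t$ simultaneously with the mild bootstrap needed to upgrade the a priori bound $A(t) = O(\log t)$ into the sharp constant via the It\^o identity. A more streamlined alternative is the change of variable $\tau = \log u$, under which $A(t) = \int_{\log n}^{\log t} U_\tau^{-2}\,\dd\tau$ with $U_\tau := e^{-\tau/2}\tilde\rho_{e^\tau}$. Bessel scaling makes $(U_\tau)$ a stationary Markov process whose invariant law is the chi distribution with $\gk$ degrees of freedom; granted ergodicity of $U$, Birkhoff's theorem immediately yields the limit as $\bbE[1/\chi^2_\gk] = 1/(\gk-2)$ in one stroke.
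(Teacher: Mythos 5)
The paper does not actually prove this proposition: it is imported verbatim from \citet{che:2001} (p.~208), so there is no in-paper argument to compare yours against, and a self-contained proof is a genuine addition. Your architecture is the right one --- time inversion $\tilde\rho_u = u\rho_{1/u}$ (precisely the identity the paper singles out in Section 5 as the indispensable Bessel tool, citing \citet{shi:wat:1973}), reducing the claim to $A(t) := \int_n^t \tilde\rho_u^{-2}\,\dd u \sim \tfrac{1}{\gk-2}\log t$ a.s. Moreover, your closing paragraph is in fact the cleanest complete route and should be promoted to the main argument: since $\tilde\rho_0 = 0$, Bessel scaling makes $U_\tau = e^{-\tau/2}\tilde\rho_{e^\tau}$ an \emph{exactly} stationary process (a radial Ornstein--Uhlenbeck diffusion, positive recurrent with invariant chi law, hence ergodic --- standard for a one-dimensional diffusion with finite speed measure), so Birkhoff gives $\frac{1}{\log t}\int_{\log n}^{\log t} U_\tau^{-2}\,\dd\tau \to \bbE[\chi_\gk^{-2}] = \tfrac{1}{\gk-2}$ a.s.; truncating the integrand at level $K$ and letting $K\to\infty$ also covers $\gk = 2$, where the limit is $+\infty$.

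Your primary It\^o/LIL route, by contrast, contains one concrete error and two patchable gaps. The error: $\tilde\rho_t \geq t^{1/2-\varepsilon}$ does \emph{not} give $A(t) = O(\log t)$; it gives only $A(t) \leq \int_n^t u^{-1+2\varepsilon}\,\dd u = O(t^{2\varepsilon})$. Fortunately no a priori growth rate is needed: once $A(\infty) = \infty$ (which holds, since otherwise $M_t$ would converge and force $\log\tilde\rho_t$ to converge, contradicting transience), Dambis--Dubins--Schwarz plus the Brownian LIL give $|M_t| \leq C\sqrt{A(t)\log\log A(t)} = o(A(t))$ outright, so the It\^o identity yields $A(t) \sim \tfrac{2}{\gk-2}\log\tilde\rho_t$ directly, and only $\log\tilde\rho_t/\log t \to 1/2$ remains. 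For that, note your Borel--Cantelli argument controls $\tilde\rho$ only along a deterministic skeleton, not for all large $t$; either add the hitting estimate $\bbP_a(\inf_u \tilde\rho_u \leq b) = (b/a)^{\gk-2}$ over dyadic blocks (this is where $\gk > 2$ enters), or observe that a skeleton $t_k = e^k$ with $\log t_{k+1}/\log t_k \to 1$ suffices because $A$ is nondecreasing. Finally, your separate $\gk = 2$ sketch only yields $A(t_k) \gtrsim (\log t_k)^2/\log\log\log t_k$ along the \emph{random} times realizing the limsup of $\log\tilde\rho$, and since that sequence may be too sparse in logarithmic scale, monotonicity of $A$ does not let you interpolate to $A(t)/\log t \to \infty$ for all $t$; the ergodic route disposes of this case without extra work.
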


\begin{theorem} \label{thm:entrance}
Let $X$ be a Wright--Fisher diffusion satisfying \eqref{eq:WFSDE} with $x_0 = 0$ and $\tup \geq 1$, and suppose $\tup_1 \geq 1$ with $\tup_1 \neq \tup$. Then $\bPT{\gq}{0} \perp \bPT{\gq_1}{0}$.
\end{theorem}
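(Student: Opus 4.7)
The plan is to adapt the strategy of Corollary~\ref{cor:reflecting}, producing a germ-field event whose $\bP{\gq}$-probability is $1$ and $\bP{\gq_1}$-probability is $0$. The tool that will replace the Brownian zero--one law \eqref{eq:0-1BM} is Cherny's logarithmic law for the squared Bessel process (\propref{prop:cherny}), whose limit depends injectively on the dimension parameter $\gk\in[2,\infty)$ (via $1/(\gk-2)$, interpreted as $+\infty$ at $\gk=2$).

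The first step is to bring $X$ into exact squared-Bessel form near $0$ by a random time change: set $Y_t := X_{\eta(t)}$ with $\eta'(t) = 4/(1-X_{\eta(t)})$. A routine It\^o computation gives
\[
\dd Y_t = [2\tup + \gf(Y_t)]\,\dd t + 2\sqrt{Y_t}\,\dd B_t, \qquad \gf(y) := -\frac{2\tdown y}{1-y} + 2sy\gh(y),
\]
so the diffusion coefficient matches exactly that of a squared Bessel process $Z$ of dimension $\gk := 2\tup$, and the residual drift satisfies $\gf(y) = O(y)$ as $y\downarrow 0$ with $\gf(y)^2/y$ bounded on $[0,1/2]$. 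The second step is to argue that the laws of $Y$ and $Z$ started at $0$ are equivalent on $\cF_\gt$, where $\gt := \inf\{t\geq 0 :\: Y_t = 1/2\}$. Since the diffusion coefficients agree, only a drift correction is needed, and the Girsanov exponent
\[
\int_0^{\cdot}\frac{\gf(Z_s)}{2\sqrt{Z_s}}\,\dd B_s - \frac{1}{2}\int_0^{\cdot}\frac{\gf(Z_s)^2}{4 Z_s}\,\dd s
\]
has bounded quadratic variation on $[0,\gt]$. The main technical obstacle is that both $Y$ and $Z$ begin at the entrance boundary $0$, so care is needed as $t\downarrow 0$: my plan is to perform the change of measure on $[\gt_\ge, \gt]$ with $\gt_\ge := \inf\{t\geq 0 :\: Y_t\geq \ge\}$ (where the argument is classical) and then let $\ge\downarrow 0$, appealing to continuity of the entrance laws of both $Y$ and $Z$ at $0$. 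A pathwise comparison, exploiting the Lipschitz property of $\gf$ on $[0,1/2]$, would be a viable alternative.

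Given this equivalence, \propref{prop:cherny} transfers directly from $Z$ to $Y$: $\bP{\gq}$-a.s.\ on the event $\{1/n < \gt\}$ (which has $\bP{\gq}$-probability $\uparrow 1$ as $n\to\infty$),
\[
\lim_{t\to\infty}\frac{1}{\log t}\int_{1/t}^{1/n} Y_s^{-1}\,\dd s = \frac{1}{2\tup - 2},
\]
while running the same construction under $\bP{\gq_1}$ yields the distinct a.s.\ value $1/(2\tup_1 - 2)$. Because $\eta$ is a continuous strictly increasing random time change with $\eta(0) = 0$ a.s., the event that this limit takes the value $1/(2\tup - 2)$ is a function of the germ of $X$ at $0$ and hence lies in the right-continuous filtration $\cF_0$; it has full measure under $\bP{\gq}$ and zero measure under $\bP{\gq_1}$, establishing $\bPT{\gq}{0}\perp\bPT{\gq_1}{0}$.
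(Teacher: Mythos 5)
Your overall architecture is sound and genuinely different from the paper's: you normalise the diffusion coefficient by a random \emph{time} change (so that the residual drift is $2\tup+\gf(Y_t)$ with $\gf(y)=O(y)$), whereas the paper uses the \emph{spatial} map $\psi(x)=[\cos^{-1}(1-2x)]^2$ and then never attempts any equivalence with a squared Bessel law at all: it picks $\gk,\gk_1$ with $2\tup<\gk<\gk_1<2\tup_1$, bounds the drift of $\psi(X)$ by these constants on a neighbourhood $[0,\varepsilon)$, and invokes the Ikeda--Watanabe comparison theorem to dominate $\psi(X)$ by squared Bessel processes pathwise up to $\gt_\varepsilon$, which yields only one-sided versions of Cherny's limit ($\liminf\geq 1/(\gk-2)$ under $\bP{\gq}$, $\limsup\leq 1/(\gk_1-2)$ under $\bP{\gq_1}$) --- but disjoint germ events are all that is needed. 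Your route, if completed, would give the sharper exact limit $1/(2\tup-2)$; the paper's route buys robustness precisely because it avoids the step where your proposal has a gap.

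That gap is your second step, specifically the passage ``perform the change of measure on $[\gt_\varepsilon,\gt]$ \dots and let $\varepsilon\downarrow 0$, appealing to continuity of the entrance laws.'' Equivalence on each $\cF_{[\gt_\varepsilon,\gt]}$ says nothing about the germ $\sigma$-field $\cF_0$, and continuity of entrance laws concerns one-dimensional marginals, not path measures on $\cF_0$. The decisive objection is that your limiting argument proves too much: two Wright--Fisher laws with $\tup\neq\tup_1$ are \emph{also} equivalent on every $[\gt_\varepsilon,\gt]$ (all interior points are non-separating, so Girsanov applies away from the boundary), yet the theorem you are proving asserts they are singular on $\cF_0$; absolute continuity does not pass to a limit $\sigma$-field without uniform integrability of the density martingales. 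Fortunately your lemma is true and the repair uses exactly the quantitative feature that distinguishes the matched-dimension pair $(Y,Z)$ from a mismatched pair of Wright--Fisher laws: the Girsanov kernel $b^2(y)=\gf(y)^2/(4y)\leq Cy$ is bounded, indeed vanishes, at $0$, so for the processes stopped at $\gt$ Novikov holds trivially on every fixed horizon,
\begin{equation*}
\bbE\left[\exp\left(\frac{1}{2}\int_0^{t\wedge\gt} \frac{\gf(Z_s)^2}{4Z_s}\,\dd s\right)\right] \leq e^{Ct/2} < \infty,
\end{equation*}
and Girsanov applied from the entrance start (using weak uniqueness of the $Y$-equation from $x_0=0$) gives equivalence directly on $\cF_{t\wedge\gt}$ for each $t>0$, which contains the germ events you need --- no $\varepsilon\downarrow 0$ limit required. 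By contrast, for $\tup\neq\tup_1$ the corresponding kernel behaves like $c/y$ near $0$ and the same argument cannot start. With that substitution (and noting the clash between your time change and the paper's selection function, both denoted $\eta$), your proof goes through; alternatively, since your time-changed $Y$ has drift $2\tup+O(y)$, you could graft the paper's comparison argument onto it unchanged.
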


\begin{proof}
Let $\psi: [0,1] \to [0,\pi^2]$ be given by
\[
\psi(x) = [\cos^{-1}(1-2x)]^2,
\]
and consider the process $Y$ defined as $Y_t = \psi(X_t)$. Using It\^o's lemma we find that $Y$ has drift and diffusion coefficients
\begin{align*}
{\mu}_Y(y) = {}& \left[\tup(1+\cos\sqrt{y}) - \tdown(1-\cos\sqrt{y}) + \frac{s}{2}\widetilde\eta(y)\sin^2\sqrt{y}\right]\frac{\sqrt{y}}{\sin\sqrt{y}}\\
& {}+ 1-\sqrt{y}\cot\sqrt{y}, & y&\in (0,\pi^2),\\
\sigma_Y(y) = {} & 2\sqrt{y},
\end{align*}
with $\widetilde{\eta}(y) := \eta \circ \psi^{-1}(y) = \eta((1-\cos{\sqrt{y}})/2)$ and ${\mu}_Y(0) := \mu_Y(0+) = 2\tup$.

We may assume without loss of generality that $\tup < \tup_1$. Choose $\gk$, $\gk_1 > 2$ such that $2\tup < \gk < \gk_1 < 2\tup_1$. Since $\mu_Y$ is continuous and $\mu_Y(0) = 2\tup$, there is a neighbourhood $[0,\varepsilon)$, $\varepsilon > 0$, of $0$ such that
\[
\mu_Y(y) < \gk, \qquad y \in [0,\varepsilon),
\]
and since we have engineered $Y$ to have the same volatility as a squared Bessel process $(\rho^2_t:\: t\geq 0)$, we can invoke a comparison theorem \citep[see Thm.\ 1.1 and Remark 1.1 of][]{ike:wat:1977} to construct a probability space on which $Y$ is governed via $\psi(\cdot)$ by $\bP{\gq}$, $\rho$ is governed by $\bbB_\gk$, and
\begin{equation}
\label{eq:comparison}
\bbP(Y_t \leq \rho^2_t\text{ for all }t\in[0,\gt_\varepsilon]) = 1.
\end{equation}
Here, $\gt_\varepsilon = \inf\{t \geq 0:\: \rho^2_t = \varepsilon\}$ and clearly $\bbP(\gt_\varepsilon > 0) = 1$.

Next introduce the events
\begin{align*}
A^\gk_n &:= \left\{\liminf_{t\to \infty}\frac{1}{\log t}\int_{1/t}^{1/n} Y_s^{-1} \dd s \geq \frac{1}{\gk - 2}\right\},\\
R^\gk_n &:= \left\{\lim_{t\to \infty}\frac{1}{\log t}\int_{1/t}^{1/n} \rho_s^{-2} \dd s = \frac{1}{\gk - 2}\right\}.
\end{align*}
Observe that
\begin{align}
\bbP(A^\gk_n) &\geq \bbP(A^\gk_n \cap \{\gt_\varepsilon \geq 1/n\})
\geq \bbP(R^\gk_n \cap \{\gt_\varepsilon \geq 1/n\})
= \bbP(\{\gt_\varepsilon > 1/n\})
\overset{n\to\infty}{\longrightarrow} 1, \label{eq:A_n}
\end{align}
with the second inequality following from the coupling \eqref{eq:comparison} on the set $\{\gt_\varepsilon \geq 1/n\}$ and the last equality following from \propref{prop:cherny}. Combining \eqref{eq:A_n} with the fact that $A^\gk_n$ is a nested sequence in the sense that $\bbP(A^\gk_n \cap A^\gk_{n+1}) = \bbP(A^\gk_n)$, we have that $A^\gk_n$ occurs eventually: if $A^\gk := \liminf_{n\to\infty} A^\gk_n$ then
\[
\bbP(A^\gk) = \bbP\left(\bigcup_{m}\bigcap_{n\geq m} A^\gk_n\right) = \bbP\left(\bigcup_m A^\gk_m\right) = 1.
 \]
 Furthermore, $A^\gk$ is $\cF_0$-measurable on the right-continuous filtration (noting that $\psi(\cdot)$ is bijective so that $A^\gk_n$ can be expressed in terms of $(X_s:\: s\in[0,1/n])$ rather than $(Y_s:\:s\in[0,1/n])$). In summary, we have shown
 \begin{equation}
 \label{eq:entrance-singular1}
 \bPT{\gq}{0}(A^\gk) = 1.
 \end{equation}
 By a similar argument (reversing the inequality in \eqref{eq:comparison}) we can show
 \begin{equation}
\label{eq:entrance-singular2}
\bPT{\gq_1}{0}(B^{\gk_1}) = 1,
 \end{equation}
 where $B^{\gk_1} = \liminf_{n\to\infty} B_n^{\gk_1}$ and
 \[
 B^{\gk_1}_n := \left\{\limsup_{t\to \infty}\frac{1}{\log t}\int_{1/t}^{1/n} Y_s^{-1} \dd s \leq \frac{1}{\gk_1 - 2}\right\}.
\]
 From \eqref{eq:entrance-singular1}--\eqref{eq:entrance-singular2} and the fact that $A^\gk \cap B^{\gk_1} = \emptyset$, we conclude $\bPT{\gq}{0} \perp \bPT{\gq_1}{0}$.
\end{proof}

\begin{remark}
The transformation $\psi$ (actually $\psi/4$) has been used to study the Kolmogorov equations associated with the Wright--Fisher diffusion (\citet[eq.\ (3.3)]{che:str:2010}; see also \citet{eps:maz:2010}).
\end{remark}
\begin{remark}
Key to the argument above is to find a `signature' event $A^\gk$ that both reveals the underlying parameter $\gk$ and is $\cF_0$-measurable. This is possible for $\gk \geq 2$ from the result of \citet{che:2001} quoted above as \propref{prop:cherny}. A different result, Corollary 2.2 of \citet{che:2001}, offers a signature event for a Bessel process with a \emph{reflecting} endpoint, $0 < \gk < 2$, which provides an alternative route to proving our Corollary \ref{cor:reflecting}.
\end{remark}

Combining Corollary \ref{cor:reflecting} and \thmref{thm:entrance}, and extending the arguments a little further, we summarise this section as follows.

\begin{theorem} \label{thm:all}
Let $X$ be a Wright--Fisher diffusion satisfying \eqref{eq:WFSDE} and suppose either of the following holds:
\begin{enumerate}[(i)]
\item $x_0 = 0$ and $\tup \neq \tup_1$, or
\item $x_0 = 1$ and $\tdown \neq \tdown_1$.
\end{enumerate}
Then $\bPT{\gq}{0} \perp \bPT{\gq_1}{0}$.
\end{theorem}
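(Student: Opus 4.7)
The plan is to combine the two main results of this section, Corollary~\ref{cor:reflecting} and Theorem~\ref{thm:entrance}, with two mixed-boundary sub-cases that fill in the remaining parameter configurations. By the symmetry $x \mapsto 1-x$ of the SDE~\eqref{eq:WFSDE} (which interchanges $\tup$ and $\tdown$, flips the sign of $s$, and sends $\eta$ to $\eta(1-\cdot)$), claim (ii) reduces to claim (i), so I would focus on the case $x_0 = 0$; by relabelling we may further assume $\tup < \tup_1$. The argument then splits exhaustively on where $\tup$ and $\tup_1$ sit relative to the Feller threshold $1$, recalling that the endpoint $0$ is exit for $\tup = 0$, regular reflecting for $\tup \in (0,1)$, and entrance for $\tup \geq 1$.

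The two ``uniform'' sub-cases are immediate: $\tup, \tup_1 \in (0,1)$ is exactly Corollary~\ref{cor:reflecting}, and $\tup, \tup_1 \in [1,\infty)$ is exactly Theorem~\ref{thm:entrance}. For the two mixed sub-cases I would mirror the short argument used at the end of the proof of Corollary~\ref{cor:reflecting}: exhibit an $\cF_0$-measurable event whose probability is $1$ under one of the two measures and $0$ under the other. If $\tup = 0$ and $\tup_1 > 0$ then both the drift and the diffusion coefficient of \eqref{eq:WFSDE} vanish at $0$, so $X_t \equiv 0$ is the unique strong solution under $\bP{\gq}$, while under $\bP{\gq_1}$ the process leaves the origin immediately; the event $A_n := \{X_s = 0 \text{ for all } s \in [0,1/n]\}$ is in $\cF_{1/n}$ and has probability $1$ under $\bP{\gq}$ and $0$ under $\bP{\gq_1}$ for every $n$, so $\bigcup_n A_n \in \bigcap_{t>0}\cF_t = \cF_0$ does the job. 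If instead $\tup \in (0,1)$ and $\tup_1 \geq 1$, then under $\bP{\gq}$ the boundary is regular reflecting so the zero set $\{t : X_t = 0\}$ accumulates at the origin a.s., while under $\bP{\gq_1}$ the boundary is entrance and $X_s > 0$ for every $s > 0$ a.s.; the event $C_n := \{\exists s \in (0,1/n) : X_s = 0\}$ has probabilities $1$ and $0$ for every $n$, and $\bigcap_n C_n \in \cF_0$ by the same right-continuity manoeuvre.

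The only technical ingredient not already in place is the claim used in the final sub-case: when $\tup \in (0,1)$ and $x_0 = 0$, the reflecting Wright--Fisher process returns to $0$ in every right-neighbourhood of $t=0$ (\emph{i.e.}~$0$ is regular for itself). This is standard for a regular boundary of a one-dimensional diffusion whose speed measure charges every neighbourhood of $0$, and it also falls out of the local comparison with a squared Bessel process of dimension $2\tup \in (0,2)$ developed in \sref{sec:reflecting}. With this in hand the four sub-cases exhaust all possibilities and the theorem follows.
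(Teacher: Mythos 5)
Your proposal is correct and follows essentially the same route as the paper: reduce (ii) to (i) via $x\mapsto 1-x$, invoke Corollary~\ref{cor:reflecting} and Theorem~\ref{thm:entrance} for the two uniform sub-cases, and dispose of the mixed case with an $\cF_0$-measurable germ-field singularity built from the event that $X$ hits $0$ in every right-neighbourhood of $t=0$ (probability $1$ under the accessible boundary, $0$ under the entrance boundary, with the accumulation of the zero set inherited from Brownian motion through the speed-and-scale time change). Your separate treatment of $\tup=0$ via the absorbed solution $X\equiv 0$ is a harmless refinement---the paper's single event $\{\exists u\in(0,t]:\,X_u=0\}$ already covers that sub-case, and your $\tup=0$ versus $\tup_1\in(0,1)$ configuration is in any event the second part of Corollary~\ref{cor:reflecting} with the roles of the two measures interchanged.
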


\begin{proof}
In case (i) the only combination of $\tup$ and $\tup_1$ not already covered by Corollary \ref{cor:reflecting} and \thmref{thm:entrance} is $\tup < 1$ and $\tup_1 \geq 1$ (and vice versa). But here it is straightforward to find a germ field singularity; for example we know
\begin{align*}
\bP{\gq}\left(\exists u \in (0,t]:\: X_u = 0\right) &= 1, & \bP{\gq_1}\left(\exists u \in (0,t]:\: X_u = 0\right) &= 0,
\end{align*}
for each $t\in (0,\infty)$. The first claim is inherited from Brownian motion's property that $t=0$ is not an isolated point of $\{t \geq 0:\: W_t = 0\}$ \citep[Thm.\ 12.35, p267]{bre:1968}, and the second claim follows from inaccessibility of the endpoint at $0$. Thus $\bPT{\gq}{0} \perp \bPT{\gq_1}{0}$ for all combinations of $\tup$ and $\tup_1$.

Case (ii) follows by applying all the same arguments to $1-X_t$.
\end{proof}

\section{Parameter estimation}\label{sec:inference}
In this section we address the problem of estimation of $\gq$ in \eqref{eq:WFSDE} from a continuously observed sample path $X^T = (X_t:\: t\in[0,T])$ in light of \thmref{thm:CU2022}. We first recall some standard theory (e.g.\ \citet[Ch.\ 9]{bas:pra:1980}, \citet[Ch.\ 6]{klo:etal:2003}) in the context of a one-dimensional SDE \eqref{eq:SDE}. For a model $\{\bPT{\gq}{T}:\: \gq \in \Theta\}$ for which the Radon--Nikodym derivative is available, the likelihood for $\gq$ is given by
\[
L_T(\gq) = \frac{\dd \bPT{\gq}{T}}{\dd \bPT{\gq_0}{T}},
\]
where $\bPT{\gq_0}{T}$ is a fixed dominating measure (here taken to be within the model). It is convenient to write the drift as
\[
\mu_\gq(X_t) = c(X_t) + a_\gq(X_t)
\]
with $a_{\gq_0}(\cdot) = 0$ so that $c(x)$ is the drift associated with $\bPT{\gq_0}{T}$.

If $\gs(X_t) > 0$ for almost all $t\in[0,T]$ and if
\begin{equation}
\label{eq:information-explode}
\bPT{\gq}{T}((I_T)_{ij} < \infty) = 1, \qquad i,j=1,\dots,\dim\Theta,
\end{equation}
for all $\gq\in\Theta$, where $I_T$ is the ($\dim\Theta \times \dim\Theta$) observed information matrix
\[
I_T = \int_0^T \frac{\nabla_\gq a_\gq(X_t)[\nabla_\gq a_\gq(X_t)]^\top}{\gs^2(X_t)} \dd t,
\]
then $L_T(\gq)$ takes the form
\begin{equation}
\label{eq:likelihood}
L_T(\gq) = \exp\left(\int_0^T \frac{a_\gq(X_t)}{\gs^2(X_t)} \dd \widetilde{X}_t - \frac{1}{2}\int_0^T \frac{a_\gq^2(X_t)}{\gs^2(X_t)} \dd t\right),
\end{equation}
where
\[
\widetilde{X}_t := X_t - \int_0^t c(X_s) \dd s.
\]
It is possible to say more when the drift is linear in the parameters, $a_\gq(X_t) = Z(X_t)(\gq-\gq_0)$ with $Z$ ($= [\nabla_\gq a_\gq]^\top$) a ($1\times \dim\Theta$)-vector not depending on $\gq$ (and without loss of generality we may also assume the columns of $Z$ to be linearly independent so that $\Theta$ has minimal dimension). Then the MLE for $L_T(\gq)$ can be found in closed form. In particular, \eqref{eq:likelihood} simplifies to
\[
L_T(\gq) = \exp\left((\gq-\gq_0)^\top Y_T - \frac{1}{2}(\gq-\gq_0)^\top I_T (\gq-\gq_0)\right), \qquad Y_T := \int_0^T \frac{Z^\top(X_t)}{\gs^2(X_t)} \dd \widetilde{X}_t,
\]
whose maximum on $\bbR^{\dim \Theta}$ is
\begin{equation}\label{eq:MLE}
\widehat{\gq}_T = \gq_0 + I_T^{-1}Y_T.
\end{equation}
We recognise condition \eqref{eq:information-explode} as being equivalent to condition \eqref{eq:explode} holding for all $\gq_1\in\Theta$, since in the linear setting one can verify that $\int_0^T b^2(X_t) \dd t = (\gq-\gq_0)^\top I_T(\gq-\gq_0)$. We have seen that this guarantees the existence of $L_T(\gq)$ for each $T$ and, as might have been expected from the discussion above, we shall have to relax this condition to allow for $L_T(\gq)$ to exist only until a separation time.

\subsection{Wright--Fisher diffusion}

We apply the above to the Wright--Fisher diffusion \eqref{eq:WFSDE}, whose drift is conveniently linear in its parameters. We find that
\begin{align}
I_T &= \frac{1}{4}\begin{pmatrix}
\int_0^T\dfrac{1-X_t}{X_t} \dd t & -T & \int_0^T(1-X_t)\eta(X_t)\dd t\\[11pt]
-T & \int_0^T\dfrac{X_t}{1-X_t}\dd t & -\int_0^TX_t\eta(X_t)\dd t\\[11pt]
\int_0^T(1-X_t)\eta(X_t)\dd t & -\int_0^TX_t\eta(X_t)\dd t & \int_0^TX_t(1-X_t)\eta^2(X_t)\dd t
\end{pmatrix},\notag\\
&=: \frac{1}{4}\begin{pmatrix}
A_T & -T & C_T\\
-T & B_T & -D_T\\
C_T & -D_T & E_T
\end{pmatrix},\label{eq:I_T-WF}
\end{align}
and
\begin{equation}
\label{eq:Y_T}
Y_T = \frac{1}{2}\begin{pmatrix} \int_0^T\frac{\dd \widetilde{X}_t}{X_t}, & -\int_0^T \frac{\dd\widetilde{X}_t}{1-X_t}, & \int_0^T \eta(X_t) \dd \widetilde{X}_t\end{pmatrix}^\top,
\end{equation}
which can be substituted into \eqref{eq:MLE} to yield $\widehat{\gq}_T$.

The three stochastic integrals appearing in $Y_T$ can be eliminated by applying It\^o's lemma respectively to the functions $\log X_t$, $\log(1-X_t)$, and $H(X_t)$ [recall \eqref{eq:scale}]. We obtain
\begin{equation}\label{eq:Y_T-WF}
Y_T = \frac{1}{2}\begin{pmatrix} 
\log X_T - \log X_0 + \frac{1}{2}\int_0^T \frac{1-X_t}{X_t} \dd t - \int_0^T \frac{c(X_t)}{X_t} \dd t\\[11pt]
\log (1-X_T) - \log (1-X_0) + \frac{1}{2}\int_0^T \frac{X_t}{1-X_t} \dd t + \int_0^T \frac{c(X_t)}{1-X_t} \dd t\\[11pt]
H(X_T) - H(X_0) - \frac{1}{2}\int_0^T X_t(1-X_t)\eta'(X_t) \dd t - \int_0^T \eta(X_t)c(X_t) \dd t
\end{pmatrix},
\end{equation}
where we recall that $c(X_t) = \frac{1}{2}\left[\tup_0(1-X_t) - \tdown_0 X_t + s_0X_t(1-X_t)\eta(X_t)\right]$.

However $\widehat{\gq}_T$ is written, we have to invert a $3 \times 3$ matrix which is notationally troublesome and so we focus on some special cases. To avoid notational confusion we will denote $\widehat{\tup}^{(1)}_T$ the estimator for $\tup$ with the other two parameters assumed known (similarly for $\widehat{\tdown}^{(1)}_T$ and $\widehat{s}^{(1)}_T$); $\widehat{\tup}^{(2)}_T$ the estimator for $\tup$ with $s$ assumed known (similarly for $\widehat{\tdown}^{(2)}_T$); and $\widehat{\tup}_T$, $\widehat{\tdown}_T$, $\widehat{s}_T$ the three components of the joint estimation problem in which none of the three parameters is known. 

First we derive the three marginal estimators for each parameter in turn, assuming the other two to be known. Through calculations similar to those above we find:
\begin{align}
\widehat{\tup}^{(1)}_T &= 1 + \dfrac{2\log X_T - 2\log X_0 + \tdown_0T - s_0C_T}{A_T},
\label{eq:alpha-tilde}\\
\widehat{\tdown}^{(1)}_T &= 1 + \dfrac{2\log (1-X_T) - 2\log (1-X_0) + \tup_0T + s_0D_T}{B_T},
\label{eq:beta-tilde}\\
\widehat{s}^{(1)}_T &= \dfrac{2\ds\int_{X_0}^{X_T} \eta(y) \dd y - \int_0^T X_t(1-X_t)\eta'(X_t) + \eta(X_t)[\tup_0(1-X_t) - \tdown_0 X_t] \dd t}{E_T}. 
\label{eq:s-tilde}
\end{align}
In the special case of genic selection, $\eta(\cdot) = 1$, the latter estimator simplifies to
\[
\widehat{s}^{(1)}_T = \dfrac{2(X_T - X_0) - \int_0^T [\tup_0(1-X_t) - \tdown_0 X_t)] \dd t}{\ds\int_0^T X_t(1-X_t) \dd t},
\]
which is studied in detail by \citet{wat:1979} and \citet{san:etal:2022}\footnote{There is a typo in the expression for $\widehat{s}^{(1)}_T$ on p1751 of \citet{san:etal:2022}, where the factor of 2 in the numerator is missing. The parameter values quoted in the simulations should all be halved.}. The more general form for $\widehat{s}^{(1)}_T$ in \eqref{eq:s-tilde}, and the estimators \eqref{eq:alpha-tilde}--\eqref{eq:beta-tilde} for $\tup$ and $\tdown$, appear to be new.

For the remainder of this section we focus on the problem of joint estimation of $(\tup,\tdown)$, with $s$ assumed to be known but possibly nonzero. Then $I_T$ is the top-left $2\times 2$ submatrix in \eqref{eq:I_T-WF}, $Y_T$ is the vector \eqref{eq:Y_T-WF} restricted to its first two entries, and from \eqref{eq:MLE} the joint MLE for $(\tup, \tdown)$ is, after some simplification,
\begin{align}
\widehat{\gq}^{(2)}_T := {}& \begin{pmatrix}
\widehat{\tup}^{(2)}_T\\
\widehat{\tdown}^{(2)}_T
\end{pmatrix} \notag\\
= {}& \frac{2}{A_TB_T - T^2} \notag\\
& {}\times
\begin{pmatrix}
B_T\log\left(\frac{X_T}{X_0}\right) + T\log\left(\frac{1-X_T}{1-X_0}\right) + \frac{B_T}{2}(A_T + T) + \frac{s}{2}(D_TT-B_TC_T)\\[11pt]
A_T\log\left(\frac{1-X_T}{1-X_0}\right) + T\log\left(\frac{X_T}{X_0}\right) + \frac{A_T}{2}(B_T + T) + \frac{s}{2}(A_TD_T - C_TT)
\end{pmatrix}.\label{eq:MLE-WF}
\end{align}

Two corrections to $\widehat{\gq}^{(2)}_T$ may be needed. First, the expression maximises $L_T(\gq)$ on $\bbR^2$ rather than on $\Theta = [0,\infty)^2$. Thus to ensure $\widehat{\gq}^{(2)}_T \in \Theta$ one should replace $\widehat{\tup}^{(2)}_T$ with $\widehat{\tup}^{(2)}_T \vee 0$ and $\widehat{\tdown}^{(2)}_T$ with $\widehat{\tdown}^{(2)}_T\vee 0$. However, it is simpler to study the asymptotic behaviour of the uncorrected estimator, as is done below.

Second, for certain models $L_T(\gq)$ is well defined only until a separation time $S$. Clearly it is possible for $S$ to be finite, manifest in $(I_T)_{11}$ exploding at time $\inf\{t \geq 0:\: A_T = \infty\}$ or $(I_T)_{22}$ exploding at time $\inf\{t \geq 0:\: B_T = \infty\}$. Recalling the results of \sref{sec:separating}, we recognise respectively these as the times $T_0$ and $T_1$ which are both mutually separating for $(\bP{\gq},\bP{\gq_0})$ (unless $\gq = \gq_0$). We should therefore define a new estimator to account for the possibility of separation.

To investigate, let us walk through an example; other cases will follow by similar arguments. Suppose that, for some $\omega \in \Omega$, we have $T_0(\omega) \leq T < T_1(\omega)$. Letting $t\uparrow T_0(\omega)$, we find $A_t(\omega) \to \infty$ and hence from \eqref{eq:MLE-WF} that
\begin{equation}\label{eq:crystallize}
\begin{pmatrix}
\widehat{\tup}^{(2)}_t(\omega)\\[11pt]
\widehat{\tdown}^{(2)}_t(\omega)
\end{pmatrix}
\to
\begin{pmatrix}
1 + 2\lim_{t\uparrow T_0(\omega)}\frac{\log X_t(\omega)}{A_t(\omega)}\\[11pt]
1 + \frac{2\log\frac{1-X_{T_0}(\omega)}{1-X_0} + \left[1+2\lim_{t\uparrow T_0(\omega)} \frac{\log X_t(\omega)}{A_t(\omega)}\right]T_0(\omega) + sD_{T_0}(\omega)}{B_{T_0}(\omega)}
\end{pmatrix}.
\end{equation}
The estimate $\widehat{\tup}^{(2)}_t(\omega)$ `crystallizes' according to the limit of $\log(X_t(\omega))/A_t(\omega)$. As is suggested from our intuition about what it means for ``observed information'' to explode, we have in fact learned $\tup$ without error: $\lim_{t\uparrow T_0} \widehat{\tup}^{(2)}_t = \tup$, $\bP{\gq}$-a.s. (This claim will be verified below.) No additional information from the trajectory beyond time $T_0$ is needed in the estimation of $\tup$. Now we recognise what has happened to $\widehat{\tdown}^{(2)}_t(\omega)$: the limit in \eqref{eq:crystallize} is none other than the univariate estimate $\widehat{\tdown}^{(1)}_{T_0}(\omega)$ from \eqref{eq:beta-tilde} except with $\tup_0$ replaced by its newly learned value, $1+2\lim_{t\uparrow T_0(\omega)} \log(X_t(\omega))/A_t(\omega)$ ($= \tup$). From time $T_0$ onwards the joint estimation problem automatically reduces to a single-parameter estimation problem. Continuing beyond $T_0$ up to time $T$ we estimate $\tdown$ in the usual way. Pleasingly, all of this behaviour is completely encapsulated in \eqref{eq:MLE-WF}.

The above argument, extended to other cases, suggests that the appropriate correction to $\widehat{\gq}^{(2)}_T$ is given by the estimator
 \begin{multline}\label{eq:vartheta}
\widehat{\vartheta}_T =
\bbI_{\{T < S\}}\begin{pmatrix}
\widehat{\tup}^{(2)}_T\\[11pt]
\widehat{\tdown}^{(2)}_T
\end{pmatrix}\\
 {}+ \bbI_{\{T_0 < T\wedge T_1\}}\begin{pmatrix}
1+2\lim_{t\uparrow T_0} \frac{\log X_t}{A_t}\\[11pt]
\lim_{t\uparrow (T\wedge T_1)} \widehat{\tdown}^{(1)}_{t}(\widehat{\tup}^{(2)}_{T_0-})
\end{pmatrix}
 + \bbI_{\{T_1 < T\wedge T_0\}}\begin{pmatrix}
\lim_{t\uparrow (T\wedge T_0)}\widehat{\tup}^{(1)}_{t}(\widehat{\tdown}^{(2)}_{T_1-})\\[11pt]
1+2\lim_{t\uparrow T_1} \frac{\log(1- X_t)}{B_t}
\end{pmatrix},
\end{multline}
where $\widehat{\tup}^{(1)}_t(\tdown_0)$ is the estimator \eqref{eq:alpha-tilde} with its dependence on $\tdown_0$ given explicitly, so $\widehat{\tup}^{(1)}_{t}(\widehat{\tdown}^{(2)}_{T_1-})$ is the same estimator with $\tdown_0$ replaced by $\lim_{u\uparrow T_1} \widehat{\tdown}^{(2)}_u$, as discussed above; $\widehat{\tdown}^{(1)}_{t}(\widehat{\tup}^{(2)}_{T_0-})$ is defined similarly.

To summarise, if the process hits a separating endpoint then the manner in which the boundary is hit `reveals' the corresponding mutation parameter and we infer it without error. Otherwise we cannot distinguish the two parameters with certainty in finite time. A similar phenomenon arises when estimating the immigration parameter associated with the CBI diffusion on the approach to extinction \citep{ove:1998}, and when estimating the recombination parameter in a multi-locus Wright--Fisher diffusion model on the approach to extinction of a haplotype \citep{gri:jen:2023}, where corrections to their corresponding estimators, similar to \eqref{eq:vartheta}, are proposed.

The following result justifies our claim that $\widehat{\vartheta}_T$ exhibits the right asymptotic behaviour.

\begin{theorem}~\label{thm:consistency}
\begin{enumerate}[(i)]
\item If $\tup, \tdown \geq 1$ then $\widehat{\gq}^{(2)}_T$ in \eqref{eq:MLE-WF} is strongly consistent.
\item If $\tup, \tdown > 0$ then $\widehat{\vartheta}_T$ in \eqref{eq:vartheta} is strongly consistent.
\item If $\tup = 0$ and $\tdown > 0$ then the first component of $\widehat{\vartheta}_T$ is a strongly consistent estimator for $\tup$; there is no consistent estimator for $\tdown$.
\item If $\tup > 0$ and $\tdown = 0$ then the second component of $\widehat{\vartheta}_T$ is a strongly consistent estimator for $\tdown$; there is no consistent estimator for $\tup$.
\item If $\tup = 0 = \tdown$ then there is neither a consistent estimator for $\tup$ nor for $\tdown$. 
\end{enumerate}
\end{theorem}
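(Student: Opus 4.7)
My plan is to combine two ingredients. First, the decomposition $\widehat{\gq}^{(2)}_T - \gq = I_T^{-1}M_T$, where under $\bP{\gq}$ the two-dimensional process $M_T = \int_0^T \gs(X_t)^{-1}Z^\top(X_t)\dd W_t$ is a continuous local martingale with $\langle M\rangle_T = I_T$; in particular $\langle M^{(1)}\rangle_T = A_T/4$ and $\langle M^{(2)}\rangle_T = B_T/4$, and the classical strong law for continuous local martingales gives $M^{(i)}_T/\langle M^{(i)}\rangle_T \to 0$ a.s.\ on $\{\langle M^{(i)}\rangle_\infty = \infty\}$. Second, It\^o's formula applied to $\log X_t$ and $\log(1-X_t)$ yields
\begin{align*}
\log X_t - \log X_0 &= \tfrac{\tup-1}{2}A_t - \tfrac{\tdown}{2}t + \tfrac{s}{2}\int_0^t (1-X_u)\eta(X_u)\dd u + N_t,\\
\log(1-X_t) - \log(1-X_0) &= \tfrac{\tdown-1}{2}B_t - \tfrac{\tup}{2}t - \tfrac{s}{2}\int_0^t X_u\eta(X_u)\dd u - N'_t,
\end{align*}
with $\langle N\rangle_t = A_t$ and $\langle N'\rangle_t = B_t$; these two displays drive every consistency claim.

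For part (i) the hypothesis $\tup,\tdown \geq 1$ makes the diffusion positive Harris recurrent with stationary density $\pi_\gq(x) \propto x^{\tup-1}(1-x)^{\tdown-1}e^{sH(x)}$, so Birkhoff's ergodic theorem gives $A_T/T \to c_\tup := \int_0^1 \tfrac{1-x}{x}\pi_\gq(\dd x)$ and $B_T/T \to c_\tdown := \int_0^1 \tfrac{x}{1-x}\pi_\gq(\dd x)$ in $(0,\infty]$ (finite iff the relevant parameter strictly exceeds $1$). Cauchy--Schwarz produces $c_\tup c_\tdown > 1$ strictly (equality would force $X$ to be $\pi_\gq$-constant), so $A_T B_T - T^2 \to \infty$ at rate at least $T^2$; writing $I_T^{-1}M_T$ out entrywise expresses each component as $M^{(i)}_T/\langle M^{(i)}\rangle_T$ times a ratio bounded above, and the martingale SLLN closes the argument.

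For part (ii) I split on whether each of $\tup,\tdown$ lies in $[1,\infty)$ or $(0,1)$. If both lie in $[1,\infty)$, (i) applies. Otherwise the corresponding hitting time is $\bP{\gq}$-a.s.\ finite by \eqref{eq:b}--\eqref{eq:c}, and the It\^o identity crystallizes the estimator at that time: as $t \uparrow T_0 < \infty$ the left side diverges to $-\infty$, the deterministic terms are bounded on $[0,T_0]$, so the identity itself forces $A_t \to \infty$ (a martingale with finite quadratic variation limit cannot diverge), and then the continuous-martingale SLLN gives $N_t/A_t \to 0$. Dividing through by $A_t$ yields $1 + 2\log X_t/A_t \to \tup$, matching the middle term of \eqref{eq:vartheta}; the symmetric argument crystallizes $1 + 2\log(1-X_t)/B_t \to \tdown$ at $T_1$. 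After such a crystallization the residual problem is a single-parameter MLE, and the same one-dimensional martingale decomposition, combined with Birkhoff plus the martingale SLLN, is strongly consistent whenever the remaining parameter is positive (the one-dimensional observed information then diverges). Threading these reductions through the piecewise definition of $\widehat{\vartheta}_T$ gives strong consistency in every subcase.

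For parts (iii)--(v) the It\^o crystallization remains valid (it never used $\tup > 0$ or $\tdown > 0$), giving strong consistency of each discoverable parameter. For each non-discoverable parameter, inspection of \tref{tab:S}---specifically rows (x) and (vi) for $\tup = 0$, their mirrors for $\tdown = 0$, and row (v) for $\tup = \tdown = 0$---shows that for every competing $\gq'$ differing only in that coordinate one has $\bP{\gq}(S \leq \infty) < 1$, so $\bP{\gq}$ and $\bP{\gq'}$ are not mutually singular; a consistent estimator would, via the event $\{\widehat{\vartheta}_T \to \gq'\}$, force mutual singularity, a contradiction. The main obstacle is the bookkeeping in part (ii): in each subcase one must identify precisely which diagonal of $I_T$ diverges, whether this happens at the deterministic $T$ or at the random $T_0$ or $T_1$, and verify that the It\^o identity and the martingale SLLN mesh cleanly at that instant; once that is done, parts (i) and (iii)--(v) fall out by specialization or by the non-singularity argument.
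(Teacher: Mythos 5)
Your proposal is correct and follows essentially the same route as the paper: the martingale error decomposition (the paper's \eqref{eq:error} and \eqref{eq:error-WF}) combined with ergodicity and the strong law for local martingales in part (i), crystallization of the estimator at the hitting times $T_0$, $T_1$ via the stochastic-interval martingale SLLN in part (ii), and, for (iii)--(v), the argument that a consistent estimator would force mutual singularity, contradicting the separating times in \tref{tab:S} (rows {\sc (v)}, {\sc (vi)}, {\sc (x)}). Two of your details are in fact slightly more explicit than the paper's own write-up---the Cauchy--Schwarz observation that $c_\tup c_\tdown > 1$, which keeps the prefactor $\left(1 - T^2/(A_T B_T)\right)^{-1}$ bounded, and the It\^o-identity argument forcing $A_t \to \infty$ as $t \uparrow T_0$ (the paper simply asserts $\langle M\rangle_T \to \infty$)---but these are refinements within the same proof, not a different approach.
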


\begin{proof}
(i) It is convenient to write $Y_T$ in \eqref{eq:Y_T} in the form
\[
Y_T = \frac{1}{2}\cM_T + \frac{1}{2}\begin{pmatrix} \int_0^T\frac{a_\gq(X_t)}{X_t} \dd t, & -\int_0^T \frac{a_\gq(X_t)}{1-X_t} \dd t, & \int_0^T \eta(X_t)a_\gq(X_t) \dd t\end{pmatrix}^\top,
\]
where $\cM_T$ is the vector
\begin{multline}
\cM_T = (M_T, N_T, Q_T)^\top\\
= \begin{pmatrix}
\int_0^T \sqrt{\frac{1-X_t}{X_t}} \dd W_t, & -\int_0^T \sqrt{\frac{X_t}{1-X_t}}\dd W_t, & \int_0^T \eta(X_t)\sqrt{X_t(1-X_t)} \dd W_t\end{pmatrix}^\top\label{eq:M}
\end{multline}
whose quadratic covariation process is $\langle \cM \rangle_T = 4I_T$ with $I_T$ as in \eqref{eq:I_T-WF}. Then in fact
\begin{align*}
Y_T &= \frac{1}{2}\cM_T + I_T(\gq-\gq_0),
\end{align*}
and, by comparison with \eqref{eq:MLE} we obtain an expression for the estimator error:
\begin{align} \label{eq:error}
\widehat{\gq}_T - \gq &= 2\langle \cM \rangle_T^{-1} \cM_T.
\end{align}
In the $2\times 2$ subproblem for $(\tup,\tdown)^\top$ this can be written explicitly (after some rearrangement) in the form
\begin{equation}\label{eq:error-WF}
\begin{pmatrix}
\widehat{\tup}^{(2)}_T\\
\widehat{\tdown}^{(2)}_T
\end{pmatrix}
= \begin{pmatrix}
{\tup}\\
{\tdown}
\end{pmatrix}
+\frac{2}{1-\dfrac{T}{\langle M\rangle_T}\dfrac{T}{\langle N \rangle_T}}\begin{pmatrix}
\frac{M_T}{\langle M \rangle_T} + \frac{N_T}{\langle N\rangle_T}\frac{T}{\langle M\rangle_T}\\[11pt]
\frac{N_T}{\langle N \rangle_T} + \frac{M_T}{\langle M\rangle_T}\frac{T}{\langle N\rangle_T}
\end{pmatrix}
\end{equation}
(and $\langle M\rangle_T = A_T$, $\langle N\rangle_T = B_T$). Now suppose $\tup,\tdown > 1$. Then $X$ is ergodic with
\begin{align} \label{eq:ergodic}
\frac{\langle M \rangle_T}{T} &\to \bbE_\gq\left(\frac{1-X_\infty}{X_\infty}\right) =: a_\infty, &
\frac{\langle N \rangle_T}{T} &\to \bbE_\gq\left(\frac{X_\infty}{1-X_\infty}\right) =: b_\infty,
\end{align}
$\bP{\gq}$-a.s.\ as $T\to\infty$. It is easily checked, by reading the stationary measure for $X_\infty$ from \eqref{eq:speed1}, that $a_\infty,b_\infty < \infty$. Moreover, each entry of $\cM_T$ is a $\bP{\gq}$-martingale and so by the strong law of large numbers for local martingales \citep[Ex.\ 1.16, p186]{rev:yor:1999} we know that, $\bP{\gq}$-a.s.\ as $T\to\infty$,
\begin{align} \label{eq:SLLNM}
\frac{M_T}{\langle M \rangle_T} &\to 0 & \text{and} &&
\frac{N_T}{\langle N \rangle_T} &\to 0.
\end{align}
Thus, combining \eqref{eq:ergodic} and \eqref{eq:SLLNM} with \eqref{eq:error-WF}, we find
\[
\begin{pmatrix}
\widehat{\tup}^{(2)}_T\\
\widehat{\tdown}^{(2)}_T
\end{pmatrix}
\overset{\bP{\gq}\text{-a.s.}}{\longrightarrow} \begin{pmatrix}
{\tup}\\
{\tdown}
\end{pmatrix}, \qquad T\to\infty,
\]
as required.

Next suppose $\tup = 1$, $\tdown > 1$. We still have that $X$ is ergodic but now with $a_\infty = \infty$. This does not preclude an appeal to the strong law of large numbers for a positive integrand, and (after multiplying throughout by $\langle M \rangle_T\langle N\rangle_T/T^2$) one can check that the error in \eqref{eq:error-WF} still goes to $0$. 

The cases $(\tup,\tdown) \in (1,\infty)\times \{1\}$ and $(\tup,\tdown) = (1,1)$ can be argued in a similar way.

(ii) Suppose first that $\tup,\tdown \in (0,1)$. The process $M_T$ is a continuous local martingale on the stochastic interval $[0,T_0)$ (see \citet[Ex.\ 1.48, p136]{rev:yor:1999} for a definition and \citet[p894]{mij:uru:2015} for further discussion and references) with
\[
\lim_{T\uparrow T_0} \langle M \rangle_{T} = \infty,
\]
and localising sequence $(\inf\{t\geq 0:\: |M_t| \geq n\} :\: n =0,1,\dots)$. As noted by \citet[Ex.\ 1.18, p187]{rev:yor:1999}, the Dambis--Dubins--Schwarz theorem can be modified to apply to a continuous local martingale on a stochastic interval. Hence the strong law of large numbers for local martingales can be similarly modified, so we have that, $\bP{\gq}$-a.s.,
\begin{align*}
\frac{M_{T}}{\langle M \rangle_{T}} &\to 0, & T\uparrow T_0,
\end{align*}
and similarly
\begin{align*}
\frac{N_{T}}{\langle N \rangle_{T}} &\to 0, & T\uparrow T_1.
\end{align*}
Suppose for simplicity we are on the event $\{T_0 < T_1\}$. The relevant term in the estimator \eqref{eq:vartheta} is constructed first by letting $T \uparrow T_0$. Taking this limit in \eqref{eq:error-WF} and using that $T_0 < \infty$ $\bP{\gq}$-a.s., we find
 \begin{equation}\label{eq:stopped1}
\lim_{T\uparrow T_0}\begin{pmatrix}
\widehat{\tup}^{(2)}_T\\
\widehat{\tdown}^{(2)}_T
\end{pmatrix}
= \begin{pmatrix}
{\tup}\\
{\tdown}
\end{pmatrix}
+\begin{pmatrix}
0\\[11pt]
\frac{2N_{T_0}}{\langle N \rangle_{T_0}}
\end{pmatrix},
\end{equation}
showing that $\widehat{\tup}^{(2)}_T$ is strongly consistent and that the error for $\widehat{\tdown}^{(2)}_{T_0}$ is of the same form as the one-parameter problem for $\widehat{\tdown}^{(1)}_T$ at time $T_0$. Continuing, we next let $T\uparrow T_1$ so that the second component of \eqref{eq:stopped1} becomes 
$\lim_{T\uparrow T_1} \widehat{\tdown}^{(2)}_T = \tdown$, 
as required. The case $\{T_1 < T_0\}$ can be handled in a similar fashion.

If $\tup < 1$ and $\tdown \geq 1$ (or vice versa) then strong consistency is achieved by combining the arguments from part (i) for $N_T$ and part (ii) for $M_T$; we omit the details. If both $\tup \geq 1$ and $\tdown \geq 1$ then $\widehat{\vartheta}_T = \widehat{\gq}^{(2)}_T$ which is covered by part (i).

(iii) Here we know that $T_0 < \infty$ a.s., and strong consistency of $\widehat{\vartheta}_T$ for $\tup$ follows as in part (ii). Now suppose there exists some consistent estimator $\widehat{\gamma}_T$ for $\tdown$. Then for each $\varepsilon > 0$:
\begin{align*}
\lim_{T\to\infty}\bP{0,\tdown,s}\left(|\widehat{\gamma}_T - \tdown| > \varepsilon\right) &= 0, & \lim_{T\to\infty}\bP{0,\tdown_1,s}\left(|\widehat{\gamma}_T - \tdown_1| > \varepsilon\right) &= 0,
\end{align*}
for $\tdown_1 \neq \tdown$, which implies $\bP{0,\tdown,s} \perp \bP{0,\tdown_1,s}$ \citep[p1766]{wei:win:1990}. But this contradicts \thmref{thm:CU2022}, where it is shown that $S = \overline{T}_1$ (see rows ({\sc vi}) and ({\sc x}) of \tref{tab:S}) $\bP{0,\tdown,s}, \bP{0,\tdown_1,s}$-a.s. Put another way, there is a positive probability that $T_1 = \infty$ in which case $S=\gd$, $\langle N \rangle_\infty < \infty$, and the error in $\widehat{\gamma}_\infty$ remains nonzero.

(iv) This follows as in (iii).

(v) Exactly one of $\{T_0 < \infty = T_1\}$ or $\{T_1 < \infty = T_0\}$ occurs, and each has positive probability. On the first event any estimator for $\tdown$ cannot be consistent by the same argument as in (iii); similarly, on the second event any estimator for $\tup$ cannot be consistent.
\end{proof}

\begin{corollary}~\label{cor:bisecting}
\begin{enumerate}[(i)]
\item If $\tup < 1$ and $\tdown > 0$ then
\[
\bP{\gq}\left(\lim_{t\uparrow T_0}\frac{\log X_t}{\int_0^t \frac{1-X_u}{X_u}\dd u} = \frac{\tup-1}{2}\right) = 1.
\]
\item If $\tdown < 1$ and $\tup > 0$ then
\[
\bP{\gq}\left(\lim_{t\uparrow T_1}\frac{\log (1-X_t)}{\int_0^t \frac{X_u}{1-X_u}\dd u} = \frac{\tdown-1}{2}\right) = 1.
\]
\end{enumerate}
\end{corollary}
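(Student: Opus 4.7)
The plan is to prove part~(i) by a direct It\^o's formula calculation on $\log X_t$, from which part~(ii) follows by symmetry. Applying It\^o's formula to \eqref{eq:WFSDE} gives
\[
\log X_t - \log X_0 = \frac{\tup-1}{2}A_t + R_t + \widetilde{W}_t,
\]
where $A_t = \int_0^t (1-X_u)/X_u\,\dd u$, $\widetilde{W}_t := \int_0^t \sqrt{(1-X_u)/X_u}\,\dd W_u$ is a continuous local martingale on the stochastic interval $[0,T_0)$ with $\langle \widetilde{W}\rangle_t = A_t$, and $R_t := -\tdown t/2 + (s/2)\int_0^t (1-X_u)\eta(X_u)\,\dd u$. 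The hypothesis $\tup<1$, $\tdown>0$ together with property~\eqref{eq:b} gives $T_0<\infty$ $\bP{\gq}$-a.s., so as $t\uparrow T_0$ both summands of $R_t$ stay bounded (the $\eta$-integral is bounded because $\eta$ is continuous on $[0,1]$); hence $R_t = O(1)$ $\bP{\gq}$-a.s.

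The key auxiliary claim is that $A_t\uparrow\infty$ as $t\uparrow T_0$, $\bP{\gq}$-a.s.\ I would argue by contradiction: if $A_{T_0-}<\infty$ on a set of positive $\bP{\gq}$-measure, then on that set the Dambis--Dubins--Schwarz representation $\widetilde{W}_t = \beta_{A_t}$ (for some Brownian motion $\beta$) forces $\widetilde{W}_t$ to converge $\bP{\gq}$-a.s.\ to a finite limit as $t\uparrow T_0$; but then the right-hand side of the displayed identity would remain bounded, contradicting $\log X_t \to -\infty$.

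With $A_t\uparrow\infty$ established, I would divide the displayed identity by $A_t$ and send $t\uparrow T_0$. The terms $\log X_0/A_t$ and $R_t/A_t$ vanish since their numerators are bounded. For $\widetilde{W}_t/A_t = \widetilde{W}_t/\langle\widetilde{W}\rangle_t$, I would invoke the strong law of large numbers for continuous local martingales on the stochastic interval $[0,T_0)$, as already used in the proof of Theorem~\ref{thm:consistency}(ii) via \citet[Ex.\ 1.18, p187]{rev:yor:1999}. This yields $\lim_{t\uparrow T_0}\log X_t/A_t = (\tup-1)/2$, giving part~(i). For part~(ii), observe that $Y_t := 1-X_t$ satisfies \eqref{eq:WFSDE} with parameters $(\tdown,\tup,-s)$ and frequency-dependence function $y\mapsto\eta(1-y)$, and that the hitting time of $0$ by $Y$ is precisely $T_1$ for $X$; part~(i) applied to $Y$ then delivers the claim. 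The only delicate point in the whole argument is the $A_t\uparrow\infty$ step; everything else reduces to standard calculus estimates and the martingale SLLN already employed in the paper.
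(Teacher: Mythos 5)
Your proof is correct, but it takes a genuinely different route from the paper. The paper proves Corollary~\ref{cor:bisecting} in one line, as a restatement of \thmref{thm:consistency}(ii): there, the error representation \eqref{eq:error-WF} for the joint MLE is taken through the limit $t\uparrow T_0$ (as in \eqref{eq:stopped1}), and since the crystallized limit of $\widehat{\tup}^{(2)}_t$ in \eqref{eq:vartheta} is exactly $1+2\lim_{t\uparrow T_0}\log X_t/A_t$, strong consistency of that component is literally the statement of part~(i). You instead bypass the estimator machinery entirely: It\^o's formula applied to $\log X_t$ gives the identity $\log X_t-\log X_0=\frac{\tup-1}{2}A_t+R_t+\widetilde{W}_t$ directly (your drift computation checks out against \eqref{eq:WFSDE}, and note that this same It\^o step is implicitly how the paper eliminated the stochastic integrals in \eqref{eq:Y_T-WF}), after which dividing by $A_t$ and applying the martingale strong law on the stochastic interval $[0,T_0)$ --- the identical tool the paper uses in \thmref{thm:consistency}(ii) --- finishes the argument. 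Your reduction of part~(ii) via $Y_t=1-X_t$, which is a Wright--Fisher diffusion with parameters $(\tdown,\tup,-s)$ and selection function $y\mapsto\eta(1-y)$, is also exactly right and matches the hypothesis pattern of \eqref{eq:b}--\eqref{eq:c}. What your route buys is self-containedness and, notably, a proof of the quadratic-variation blow-up $\lim_{t\uparrow T_0}\langle\widetilde{W}\rangle_t=A_{T_0-}=\infty$: the paper's proof of \thmref{thm:consistency}(ii) simply asserts $\lim_{T\uparrow T_0}\langle M\rangle_T=\infty$ without argument, whereas your contradiction argument (on $\{A_{T_0-}<\infty\}$ the local martingale $\widetilde{W}_t=\beta_{A_t}$ converges to a finite limit, forcing $\log X_t$ to stay bounded even though $T_0<\infty$ a.s.\ by \eqref{eq:b} and hence $\log X_t\to-\infty$ by path continuity) supplies this missing detail cleanly. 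What the paper's route buys is economy --- the corollary is free once \thmref{thm:consistency}(ii) is proved --- and the conceptual framing of the limit as the crystallization of the MLE at separation, as in \eqref{eq:crystallize}. The only points of care in your write-up are that the local-martingale convergence on $\{A_{T_0-}<\infty\}$ holds a.s.\ \emph{on that set} (as you intend), and that the Dambis--Dubins--Schwarz representation on a stochastic interval may require an enlargement of the probability space; neither affects validity.
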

\begin{proof}
This is a restatement of \thmref{thm:consistency}(ii) in the respective cases $T_0 < \infty$ and $T_1 < \infty$ $\bP{\gq}$-a.s., taking note of the explicit limit of $\widehat{\vartheta}_T$ on the approach to $T_0$ or $T_1$, as given in \eqref{eq:vartheta}.
\end{proof}
Corollary \ref{cor:bisecting} provides another verification that $(\bP{\gq},\bP{\gq_0})$ will separate on hitting a reflecting endpoint if their corresponding mutation parameters differ, and further shows us how to extract the mutation parameter from the sample path.

For certain submodels we can also obtain a central limit theorem. Here we can return to the joint problem for estimating all three of $\tup$, $\tdown$, and $s$.
\begin{theorem}\label{thm:CLT}
Let $\widehat{\gq}_T$ as in \eqref{eq:MLE} be an estimator for $\gq = (\tup,\tdown,s)^\top$. If $\tup, \tdown > 1$ then
\begin{equation}\label{eq:CLT}
\sL_\gq\left(\sqrt{T}(\widehat{\gq}_T - \gq)\right) \to \cN(0,4\Sigma^{-1});
\end{equation}
that is, $\sqrt{T}$ times the estimator error converges in distribution to a normal distribution with mean 0 and covariance matrix $4\Sigma^{-1}$, where
\[
\Sigma := \begin{pmatrix}
\phantom{-}a_\infty & -1\phantom{_\infty} & \phantom{-}c_\infty\\
-1\phantom{_\infty}  & \phantom{-}b_\infty & -d_\infty\\
\phantom{-}c_\infty & -d_\infty & \phantom{-}e_\infty
\end{pmatrix},
\]
with $a_\infty$ and $b_\infty$ given by \eqref{eq:ergodic} and
\begin{align*}
c_\infty &:= \bbE_\gq\left((1-X_\infty)\eta(X_\infty)\right), &
d_\infty &:= \bbE_\gq(X_\infty\eta(X_\infty)),\\
e_\infty &:= \bbE_\gq(X_\infty(1-X_\infty)\eta^2(X_\infty)).
\end{align*}
\end{theorem}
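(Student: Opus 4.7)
The plan is to lean on the error representation $\widehat{\gq}_T - \gq = 2\langle \cM\rangle_T^{-1}\cM_T$ established in the proof of Theorem~\ref{thm:consistency}(i) and to rearrange
\[
\sqrt{T}(\widehat{\gq}_T - \gq) = 2\left(\frac{\langle \cM\rangle_T}{T}\right)^{-1}\frac{\cM_T}{\sqrt{T}}.
\]
The CLT then follows from an almost-sure limit for $\langle\cM\rangle_T/T$ combined with a distributional limit for $\cM_T/\sqrt{T}$. Under $\bP{\gq}$ with $\tup,\tdown>1$, $X$ is ergodic with stationary density proportional to $x^{\tup-1}(1-x)^{\tdown-1}e^{sH(x)}$.

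For the first limit I would apply Birkhoff's ergodic theorem entrywise to the entries $A_T,B_T,C_T,D_T,E_T$ of $4I_T = \langle\cM\rangle_T$. The hypothesis $\tup>1$ is precisely what makes $a_\infty = \bbE_\gq[(1-X_\infty)/X_\infty]<\infty$ (the integrand $x^{\tup-2}(1-x)^{\tdown}e^{sH(x)}$ is integrable at $0$ iff $\tup>1$), and symmetrically $\tdown>1$ delivers $b_\infty<\infty$; $c_\infty,d_\infty,e_\infty$ are automatically finite by boundedness of $\eta$ on $[0,1]$. This gives $\langle\cM\rangle_T/T \to \Sigma$ $\bP{\gq}$-a.s.

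For the distributional limit I would invoke the multivariate central limit theorem for continuous local martingales. By the Cram\'er--Wold device it suffices to show $\lambda^\top\cM_T/\sqrt{T} \overset{d}{\to} \cN(0,\lambda^\top\Sigma\lambda)$ for each $\lambda\in\bbR^3$; the scalar continuous local martingale $\lambda^\top\cM_T$ has quadratic variation $\lambda^\top\langle\cM\rangle_T\lambda \sim T\lambda^\top\Sigma\lambda$, and a Dambis--Dubins--Schwarz time change combined with standard Brownian scaling delivers the required scalar CLT. Slutsky's theorem, together with continuity of matrix inversion, then yields
\[
\sqrt{T}(\widehat{\gq}_T - \gq) \overset{d}{\to} 2\Sigma^{-1}\cN(0,\Sigma) = \cN(0,4\Sigma^{-1}),
\]
as required.

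The step that demands the most care is the implicit requirement that $\Sigma$ be invertible (so that the previous display is both meaningful and nondegenerate). Writing $\Sigma = \bbE_\gq[vv^\top]$ with $v := (\sqrt{(1-X_\infty)/X_\infty},\,-\sqrt{X_\infty/(1-X_\infty)},\,\sqrt{X_\infty(1-X_\infty)}\eta(X_\infty))^\top$, any null vector $\lambda$ would have to satisfy $\lambda^\top v = 0$ almost surely on $(0,1)$; examining the asymptotics as $x\downarrow 0$ and $x\uparrow 1$ forces $\lambda_1=\lambda_2=0$ (the first coordinate of $v$ blows up at $0$ while the other two vanish there, and symmetrically at $1$), and the standing hypothesis $\eta\not\equiv 0$ together with positivity of the stationary density then forces $\lambda_3=0$. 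Hence $\Sigma$ is positive definite, completing the argument.
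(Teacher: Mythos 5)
Your proposal is correct and follows essentially the same route as the paper: the error representation \eqref{eq:error}, the a.s.\ limit $\langle\cM\rangle_T/T \to \Sigma$ via ergodicity (with $\tup,\tdown>1$ ensuring $a_\infty,b_\infty<\infty$), the martingale central limit theorem assembled through the Cram\'er--Wold device, and Slutsky's theorem. Your explicit verification that $\Sigma = \bbE_\gq[vv^\top]$ is positive definite is a welcome addition that the paper leaves implicit, since invertibility of $\Sigma$ is indeed needed for \eqref{eq:CLT} to be meaningful.
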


\begin{proof}
With $\cM_T$ as in \eqref{eq:M}, we know as in the proof of \thmref{thm:consistency} that $\langle \cM \rangle_T/T \overset{\text{a.s.}}{\longrightarrow} \Sigma$ as $T\to\infty$. Since $\tup,\tdown > 1$, the entries of $\Sigma$ are all finite. Hence by the martingale central limit theorem (see \citet[p43]{kut:2004} for a statement in the univariate case; a multidimensional version can be constructed from the Cram\'er--Wold theorem and the fact that all of the projections of a multivariate normal distribution are again normal):
\[
\sL_\gq\left(\frac{1}{\sqrt{T}}\cM_T\right) \to \cN(0,\Sigma).
\]
Equation \eqref{eq:CLT} then follows from \eqref{eq:error}.
\end{proof}
\begin{remark}
Integrals of the form appearing in $\Sigma$ are well known in the study of non-neutral population genetics models. They are intractable in general. If the model is neutral ($s=0$) and we take $\eta(\cdot) = 1$ then $\Sigma$ can be given exactly:
\[
\Sigma = \begin{pmatrix}
\tdown/(\tup-1) & -1 & \tdown/(\tup+\tdown)\\[11pt]
-1  & \tup/(\tdown-1) & -\tup/(\tup+\tdown)\\[11pt]
 \tdown/(\tup+\tdown) & -\tup/(\tup+\tdown) & \tup\tdown/[(\tup+\tdown)(\tup+\tdown+1)]
\end{pmatrix}.
\]
\end{remark}
\begin{remark}
Notice that, unlike \thmref{thm:consistency}, the result in \thmref{thm:CLT} allows for neither $\tup = 1$ nor $\tdown = 1$. Although such models continue to exhibit inaccessible boundaries (and thus $\bPT{\gq}{T}((I_T)_{ij} < \infty) = 1$ for each $i,j$) they occupy an inconvenient borderline case for which some entries ($a_\infty$ or $b_\infty$) of $\Sigma$ are infinite. It seems intuitively possible that the mutation rate is strong enough to keep the boundaries inaccessible but weak enough that the trajectory might get sufficiently close to the boundary for the rate of learning of a parameter to be \emph{faster} than the usual $T^{-1/2}$. To obtain a more general central limit theorem allowing for $\tup = 1$ or $\tdown = 1$ would require a study of the asymptotic behaviour of $Y_T$ in \eqref{eq:Y_T-WF}. 
An analogous study for the CBI diffusion is conducted by \citet{ben:keb:2012,ben:keb:2013}.

A central limit theorem allowing for $\tup < 1$ or $\tdown < 1$ would further require the study of $Y_T$ along sequences of stopping times of the form $T_n^{(i,j)} = \inf\{t\geq 0:\: (I_T)_{ij} = n\}$, $n=1,2,\dots$.
\end{remark}

\section*{Acknowledgements}
I would very much like to thank Aleks Mijatovi\'c and Dario Span\`o for helpful conversations. This work was supported by The Alan Turing Institute under the EPSRC grant EP/N510129/1. 

\appendix
\renewcommand{\thesection}{\Alph{section}}
\renewcommand{\thedefinition}{\Alph{section}.\arabic{definition}}
\renewcommand{\theequation}{\thesection\arabic{equation}}
\setcounter{equation}{0}

\section*{Appendix}

\section{Separating points of general diffusions}
\label{sec:separating-points}
In this section we give the definition of a non-separating point for a general diffusion on $I$ characterised by its scale function and speed measure.
\begin{definition}[\citet{cri:uru:arxiv:v2}]
\label{def:good-interior}
A point $x \in I^\circ$ is a \emph{non-separating interior point} if there exists an open neighbourhood $N(x) \subset I^\circ$ of $x$ such that
\begin{enumerate}[(i)]
\item The right derivative
\[
\frac{\dd^+ S_{\gq_0}}{\dd S_{\gq_1}}(y) := \lim_{h \downarrow 0} \frac{S_{\gq_0}(y+h) - S_{\gq_0}(y)}{S_{\gq_1}(y+h) - S_{\gq_1}(y)}
\]
exists for all $y\in N(x)$ and is positive and finite;
\item there exists a Borel function $\varrho:N(x)\to\bbR$ such that
\begin{align}
\int_{N(x)} \varrho^2(y) S_{\gq_1}(y) \dd y &< \infty,\label{eq:good1}\\
\frac{\dd^+ S_{\gq_0}}{\dd S_{\gq_1}}(y) - \frac{\dd^+ S_{\gq_0}}{\dd S_{\gq_1}}(x) &= \int_x^y \varrho(z) S_{\gq_0}(z) \dd z, & y &\in N(x); \label{eq:good2}
\end{align}
\item the derivative
\[
\frac{\dd m_{\gq_0}}{\dd m_{\gq_1}}(y) := \lim_{h \downarrow 0} \frac{m_{\gq_0}((y-h,y+h))}{m_{\gq_1}((y-h,y+h))}
\]
exists for all $y\in N(x)$ and, on $N(x)$,
\begin{equation}
\label{eq:speed-times-scale}
\frac{\dd m_{\gq_0}}{\dd m_{\gq_1}}(y) \frac{\dd^+ S_{\gq_0}}{\dd S_{\gq_1}}(y) = 1.
\end{equation}
\end{enumerate}
\end{definition}

\begin{definition}[\citet{cri:uru:arxiv:v2}]
\label{def:good-endpoint}
The point $x = \inf I$ is a \emph{non-separating left endpoint} if there exists a non-empty open interval $N(x) \subset I^\circ$ with $x$ as left endpoint such that
\begin{enumerate}[(i)]
\item $x$ is `half-good': (a) all points in $N(x)$ are non-separating interior points, (b) $S_{\gq_1}(x) := \lim_{y \downarrow x} S_{\gq_1}(y) \in \bbR$, and (c)
\begin{equation}
\label{eq:half-good}
\int_{N(x)} |S_{\gq_1}(y) - S_{\gq_1}(x)|\varrho^2(y) S_{\gq_1}(y) \dd y < \infty,
\end{equation}
with $\varrho$ as in \dref{def:good-interior};
\item if $x \in I_{\gq_1}$ (equivalently $x\in I_{\gq_2}$ if $x$ is half-good) then either $m_{\gq_0}(\{x\}) = m_{\gq_1}(\{x\}) = \infty$ or $m_{\gq_0}(\{x\}) < \infty, m_{\gq_1}(\{x\}) < \infty$;
\item if $x \in I_{\gq_1}$ (equivalently $x\in I_{\gq_2}$ if $x$ is half-good) and $m_{\gq_0}(\{x\}) < \infty, m_{\gq_1}(\{x\}) < \infty$, then:
\begin{enumerate}[(a)]
\item the right derivative $\frac{\dd^+ S_{\gq_0}}{\dd S_{\gq_1}}(x)$ exists and is positive and finite;
\item there exists a Borel function $\varrho:N(x)\to\bbR$ such that \eqref{eq:good1} and \eqref{eq:good2} hold;
\item the derivative
\[
\frac{\dd m_{\gq_0}}{\dd m_{\gq_1}}(x) := \lim_{y \downarrow x} \frac{m_{\gq_0}([x,y))}{m_{\gq_1}([x,y))}
\]
exists and is positive and finite, and \eqref{eq:speed-times-scale} holds at $x$.
\end{enumerate}
\end{enumerate}
\end{definition}
A similar definition can be given for a non-separating right endpoint.

\begin{proof}[Proof of \lemmaref{lem:separating-points}]
(a) Fix $x \in (0,1)$ and let $N(x) = (x-\varepsilon,x+\varepsilon)$ be such that $0 < x-\varepsilon < x < x+\varepsilon < 1$. First we check conditions (i--iii) of \dref{def:good-interior}. Using \eqref{eq:scale}--\eqref{eq:speed2}, we find
\begin{align*}
\frac{\dd^+ S_{\gq_0}}{\dd S_{\gq_1}}(y) &= C_{\gq_0}C_{\gq_1}^{-1}y^{-(\tup_0-\tup_1)}(1-y)^{-(\tdown_0-\tdown_1)}e^{-(s_0-s_1)H(y)},\\
\frac{\dd m_{\gq_0}}{\dd m_{\gq_1}}(y) &= C_{\gq_1}C^{-1}_{\gq_0}y^{\tup_0-\tup_1}(1-y)^{\tdown_0-\tdown_1}e^{(s_0-s_1)H(y)},
\end{align*}
so conditions (i) and (iii) hold on $N(x)$ and the only difficulty is in exhibiting a function $\varrho$ to satisfy (ii). We claim that (ii) is satisfied by the choice
\begin{equation}
\label{eq:beta}
\varrho(y) = C_{\gq_1}^{-1}y^{\tup_1}(1-y)^{\tdown_1}e^{s_1H(y)}\left(\frac{\tdown_0-\tdown_1}{1-y} - \frac{\tup_0-\tup_1}{y} - (s_0 - s_1)\right),
\end{equation}
which is easily verified by a direct calculation. Thus all interior points $x\in (0,1)$ are non-separating.

(b) We check conditions (i--iii) of \dref{def:good-endpoint}. For the endpoint $0$, the appropriate neighbourhood is $N(0) = (0,\varepsilon) \subset (0,1)$. Checking (i) we find
\begin{align} \label{eq:S0}
S_{\gq_1}(0) &:= \lim_{x\downarrow 0} S_{\gq_1}(x) \begin{cases}
\in \bbR & \tup_1 < 1,\\
= -\infty & \tup_1 \geq 1,
\end{cases}
\end{align}
so we need to determine whether \eqref{eq:half-good} holds when $\tup_1 < 1$. Note that if $\varrho$ is given as in \eqref{eq:beta} then $\varrho^2$ can be written in the form
\[
\varrho^2(y) = C_{\gq_1}^{-2}y^{2\tup_1-2}(1-y)^{2\tdown_1}e^{2s_1H(y)}[(\tup_0-\tup_1)^2 + y\phi(y)]
\]
for some continuous, bounded function $\phi:[0,\epsilon]\to\bbR$ whose exact form will turn out to be unimportant, so that
\begin{multline}
\label{eq:lgoodWF}
|S_{\gq_1}(y) - S_{\gq_1}(0)|\varrho^2(y) S_{\gq_1}(y) =\\ \left|\int_0^y z^{-\tup_1}(1-z)^{-\tdown_1}e^{-s_1H(z)} \dd z\right|[(\tup_0-\tup_1)^2 + y\phi(y)]y^{\tup_1-2}(1-y)^{\tdown_1}e^{s_1H(y)},
\end{multline}
which is integrable on $N(0)$ if and only if
\[
G(x) := \left(\int_0^x z^{-\tup_1} \dd z\right)[(\tup_0-\tup_1)^2 + x\phi(x)]x^{\tup_1-2}
\]
is integrable on $N(0)$. Recalling our assumption that $\tup_1 < 1$, we have:
\[
G(x) = \frac{x^{1-\tup}}{1-\tup_1}[(\tup_0-\tup_1)^2 + x\phi(x)]x^{\tup_1-2},
\]
so, noting that $\phi$ is integrable on $N(0)$,
\begin{align*}
\int_{0+}^x G(y) \dd y &= \frac{1}{1-\tup_1}\int_{0+}^x [(\tup_0-\tup_1)^2y^{-1} + \phi(y)] \dd y\\
&= \lim_{z\downarrow 0} \frac{(\tup_0-\tup_1)^2}{1-\tup_1}(\log x - \log z) + \frac{1}{1-\tup_1}\int_0^x \phi(y) \dd y & \begin{cases}
< \infty & \tup_0 = \tup_1,\\
= \infty & \tup_0 \neq \tup_1.
\end{cases}
\end{align*}
Hence for $\tup_1 < 1$, the expression \eqref{eq:lgoodWF} is integrable on $N(0)$ if and only if $\tup_0 = \tup_1$. Combining this observation with \eqref{eq:S0} we conclude that the point $0$ satisfies condition (i) of \dref{def:good-endpoint} if and only if $\tup_0 = \tup_1 < 1$. Under this (highly restrictive!) condition, it is straightforward to verify that the remaining conditions (ii) and (iii) of \dref{def:good-endpoint} hold. In summary, $0$ is a non-separating endpoint if and only if $\tup_0 = \tup_1 < 1$.

(c) This is similar to case (b) and so is omitted.
\end{proof}

\begin{remark}
As discussed by \citet{des:etal:2021} and \citet{cri:uru:arxiv:v2}, the `half-good' condition \eqref{eq:half-good} for an accessible endpoint pertains to non-separation on the \emph{approach} to that endpoint, while condition \eqref{eq:good1}--\eqref{eq:good2} pertains to non-separation on \emph{emergence} from the endpoint. The proof of \lemmaref{lem:separating-points} verifies that if $0$ is an accessible endpoint then separation occurs on the approach to $0$ unless $\tup_0 = \tup_1$. It is possible to show that the same condition, $\tup_0 = \tup_1$, is required to guarantee non-separation on emergence from $0$ (which also follows from \thmref{thm:all}).
\end{remark}

\setcounter{equation}{0}
\section{Separating points of It\^o SDEs}
\label{sec:speed-and-scale}
In this section we provide some intuition for Definition \ref{def:non-separating}. We would like to compute the separating time of two measures in order to determine up to what time \eqref{eq:equivalent} holds. A key observation is that one can use the occupation times formula to convert a pathwise statement such as the event appearing in \eqref{eq:explode} into a \emph{deterministic} statement about the local integrability of a function.

Consider the first exit time $T_{c,d} = \inf\{t \geq 0:\: X_t = c \text{ or } X_t = d\}$ from a neighbourhood $(c,d) \subset I$ of $x_0$, with $l < c < x_0 < d < r$. For a function $f:I\to[0,\infty]$ we have ($\bP{\gq}$-a.s.):
\begin{align*}
\int_0^{T_{c,d}} f(X_t) \dd t &= \int_0^{T_{c,d}} g(Y_t) \dd t = \int_0^{T^W_{S_{\gq}(c),S_{\gq}(d)}} \frac{g(W_{\langle Y\rangle_u})}{\gs_Y^2(W_{\langle Y\rangle_u})} \dd \langle Y\rangle_u\\
&= \int_0^{T^W_{S_{\gq}(c),S_{\gq}(d)}} \frac{g(W_v)}{\gs_Y^2(W_v)} \dd v = \int_\bbR \frac{g(y)}{\gs_Y^2(y)} \ell^y_{T^W_{S_{\gq}(c),S_{\gq}(d)}}(W) \dd y,
\end{align*}
where $g(y) := f \circ S_{\gq}^{-1}(y)$ for $y \in (S_{\gq}(c),S_{\gq}(d))$ and $T^W_{S_{\gq}(c),S_{\gq}(d)} = \inf\{t \geq 0:\: W_t = S_{\gq}(c) \text{ or } W_t = S_{\gq}(d)\}$. The identity $Y_t = W_{\langle Y\rangle_t}$ follows from the Dambis--Dubins--Schwarz theorem \citep[Ch.\ V Thm.\ 1.6, p181, and Ex.\ 1.18, p187]{rev:yor:1999}, noting that $Y$ is a continuous local martingale, and the last equality follows from the occupation times formula \citep[Ch.\ VI, Cor.\ 1.6, p224]{rev:yor:1999}. Since $x \mapsto \ell_t^x(W)$ is continuous and has compact support, it is clear that finiteness of $\int_0^{T_{c,d}} f(X_t) \dd t$ is determined only by the integrability or otherwise of $g/\gs_Y^2$ in $(S_{\gq}(c),S_{\gq}(d))$. 

We would like to know whether a point $z$ is separating for $X$; that is, whether the measures $\bP{\gq_0}$, $\bP{\gq_1}$ must separate if $X$ visits the point $z$. From equation \eqref{eq:explode}, the function whose integrability is of relevance is $f=b^2$ as in \eqref{eq:b-int}, and the lesson from the calculation above is that we must investigate the finiteness of
\begin{equation}
\label{eq:integrability}
\int_{S_{\gq}(c)}^{S_{\gq}(d)} \frac{b^2(S_{\gq}^{-1}(y))}{[S_{\gq}'(S_{\gq}^{-1}(y))]^2\gs^2(S_{\gq}^{-1}(y))} \dd y = \int_c^d \frac{b^2(x)}{S_{\gq}'(x)\gs^2(x)} \dd x.
\end{equation}
Noting that the term $S_{\gq}'$ is positive and continuous and does not affect the finiteness of \eqref{eq:integrability}, Definition \ref{def:non-separating} follows.

\bibliographystyle{myplainnat}
\bibliography{bibliography}
\end{document}